\documentclass{kms-b}


\issueinfo{}
  {}
  {}
  {}
\pagespan{1}{}
\copyrightinfo{}
  {Korean Mathematical Society}

\usepackage{graphicx}
\allowdisplaybreaks

\theoremstyle{plain}
\newtheorem{theorem}{Theorem}[section]

\newtheorem{lemma}[theorem]{Lemma}
\newtheorem{corollary}[theorem]{Corollary}

\theoremstyle{definition}

\newtheorem{example}[theorem]{Example}

\theoremstyle{remark}
\newtheorem{remark}[theorem]{Remark}

\begin{document}

\title[Almost Ricci-Yamabe solitons on Almost Kenmotsu Manifolds]
{Almost Ricci-Yamabe solitons on Almost Kenmotsu Manifolds}

\author[M. Khatri]{Mohan Khatri}
\address{Mohan Khatri \\ Department of Mathematics and Computer Science \\ Mizoram University \\ Aizawl-796004, India}
\email{mohankhatri.official@gmail.com}

\author[J.P. Singh]{Jay Prakash Singh}
\address{Jay Prakash Singh \\ Department of Mathematics and Computer Science \\ Mizoram University \\ Aizawl-796004, India}
\email{jpsmaths@gmail.com}

\subjclass[2020]{Primary 53C20, 53C25, 53C15}
\keywords{Almost Ricci-Yamabe soliton, Kenmotsu manifold, Almost Kenmotsu manifold, Ricci soliton, Yamabe soliton}

\begin{abstract}
This manuscript examines almost Kenmotsu manifolds (briefly, AKMs) endowed with the almost Ricci-Yamabe solitons (ARYSs) and gradient ARYSs. The condition for an AKM with ARYS to be $\eta$-Einstein is established. We also show that an ARYS on Kenmotsu manifold becomes Ricci-Yamabe soliton under certain restrictions. In this series, it is proven that a $(2n+1)$-dimensional $(\kappa, \mu)'$-AKM equipped with a gradient ARYS is either locally isometric to $\mathbb{H}^{n+1}(-4)\times\mathbb{R}^n$ or the Reeb vector field and the soliton vector field are codirectional. The properties of $3$-dimensional non-Kenmotsu AKMs endowed with a gradient ARYS are studied.
\end{abstract}

\maketitle

\section{Introduction}
The Ricci flow is a well-known geometric flow, introduced by Hamilton \cite{1} in 80's, and it is used to prove the three-dimensional sphere theorem \cite{2}. The Ricci flow plays a crucial role in the proof of Poincar\'{e} and Thurston's conjectures. The Ricci soliton is a special self-similar solution of the Hamilton's Ricci flow: $\frac{\partial}{\partial t}g(t)=-2S(t),$ with initial condition $g(0)=g_0$. A Ricci soliton $(g, V, \lambda)$ on an $m$-dimensional Riemannian manifold $(M,g)$ is defined by
\begin{eqnarray}
(\mathcal{L}_Vg)(\Omega_{1},\Omega_{2})+2S(\Omega_{1},\Omega_{2})=2\lambda g(\Omega_{1},\Omega_{2}),\nonumber
\end{eqnarray}
for any vector fields $\Omega_{1},\Omega_{2}$ on $M$. Here, $\mathcal{L}_Vg$ denotes the Lie-derivative of $g$ along a potential vector field $V$, $S$ is the Ricci tensor of $M$ and $\lambda$, real constant (soliton constant). It is said to be shrinking, steady and expanding accordingly as $\lambda$ is positive, zero and negative, respectively. When the vector field $V$ is zero, it is said to be trivial and when $V$ is the gradient of a smooth function $f$ on $M$, that is, $V=\nabla f$, where $\nabla$ is the covarient operator of $g$,  then we say that the Ricci soliton reduces to the gradient Ricci soliton. For details see \cite{chaubey1,3,Pe}.\\

Similar to Ricci flow, Hamilton introduced Yamabe flow to tackle the Yamabe's problem on manifolds of positive conformal Yamabe invariant. The Yamabe soliton is a self-similar solution to the Yamabe flow. On a Riemannian manifold $(M,g)$, a Yamabe soliton is given by
\begin{eqnarray}
(\mathcal{L}_Vg)(\Omega_{1},\Omega_{2})=2(r-\lambda)g(\Omega_{1},\Omega_{2})\nonumber
\end{eqnarray}
for any vector fields $\Omega_{1},\Omega_{2}$ on $M$. Here, $r$ is the scalar curvature of the manifold and $\lambda$, real constant. The Yamabe soliton preserves the conformal class of the metric but the Ricci soliton does not in general. Moreover, in dimension $n=2$, both the solitons are similar. If $\lambda$ is a smooth function on $M$, then the Ricci solitons and the Yamabe solitons are called almost Ricci solitons \cite{4} and almost Yamabe solitons \cite{5}, respectively.\\

In \cite{9}, G\"{u}ler and Crasmareanu gave  the notion of Ricci-Yamabe flow.  
A Ricci-Yamabe soliton (RYS) of type $(\alpha, \beta)$ 
 on $M$ emerges as the limit of the solution of Ricci-Yamabe flow, and it is goverened by the following equation: 
\begin{eqnarray}\label{b1}
\mathcal{L}_Vg+2\alpha S=(2\lambda -\beta r)g,
\end{eqnarray}
where $\lambda,\alpha,\beta \in \mathbb{R}$. If the soliton vector $V$ is a gradient of some smooth function $f$ on $M$, then (\ref{b1}) becomes gradient RYS equation, that is, 
\begin{eqnarray}\label{b2}
\nabla^2f+\alpha S=(\lambda-\frac{1}{2}\beta r)g,
\end{eqnarray}
where $\nabla^2f$ is the Hessian of $f$.
An RYS is said to be expanding, shrinking or steady if $\lambda$ is negative, positive or zero, respectively. 
Particularly, $(1,0)$, $(0,1)$, $(1,-1)$ and $(1,-2\rho)$-type RYSs are Ricci solitons, Yamabe solitons, Einstein solitons and $\rho$-Einstein solitons, respectively. This infers that the notion of RYS is a natural generalization of a large class of soliton-like equations. If we replace $\lambda$ in equation (\ref{b1}) by a smooth function, then we say that $(M,g)$ is an ARYS.

Recently, in \cite{10}, the author studied RYSs on almost Kenmotsu manifolds. He showed that a $(k,\mu)'$-AKM admitting a RYS or gradient RYS is locally isometric to a product manifold. 
Siddiqi and Akyol \cite{11} have introduced the notion of $\eta$-RYS and established the geometrical bearing on Riemannian submersions in terms of $\eta$-RYS with the potential field. 
Yolda\c{s} \cite{Y} studied the geometrical properties of Kenmotsu manifolds with $\eta$-RYSs. In \cite{has1, has2}, authors established several results of RYSs within the framework of indefinite Kenmotsu manifolds and Riemannian manifolds, respectively. In \cite{J}, authors explored the applications of RYS in perfect fluid spacetimes. Singh and Khatri analyzed ARYS in almost contact metric manifolds and obtained several results. The last authors also analysed ARYS with certain vector fields \cite{M1}. In this sequel,  we study ARYSs and gradient ARYSs within the framework of  AKMs. \\

The structure of this manuscript is as follows:
In Section 2 is preliminaries. Normal AKMs endowed with ARYSs and gradient ARYSs are discussed in Section 3. It is shown that a Kenmotsu manifold with ARYS is $\eta$-Einstein. Also, we prove that an ARYS in Kenmotsu manifold reduces to an RYS, provided $Hess_\lambda(\zeta,\zeta)$ is invariant along $\zeta$. 
In Section 4,  we prove that if a $(\kappa,\mu)'$-AKM with $h'\neq0$ admits a gradient ARYS, then either $M$ is locally isometric to $\mathbb{H}^{n+1}(-4)\times\mathbb{R}^n$ or the potential vector field is pointwise collinear with the Reeb vector field. In Section 5, we investigate gradient ARYS in 3-dimensional non-Kenmotsu AKM.


\section{Preliminaries}
A smooth manifold $M^{2n+1}$ (dim$M=2n+1$) satisfying equations
\begin{eqnarray}\label{4}
\varphi^2\Omega_{1}=-\Omega_{1}+\eta(\Omega_{1})\zeta,~~~~~~\eta(\Omega_{1})=g(\Omega_{1},\zeta), \forall \,\, \Omega_{1} \in \mathfrak{X}(M),
\end{eqnarray}
for a $(1,1)$-tensor field $\varphi$, a unit vector field $\zeta$ (called the Reeb vector field) and a 1-form $\eta$, is termed as an almost contact manifold \cite{16}. Here $\mathfrak{X}(M)$ represents the collection of smooth vector fields of $M^{2n+1}$. 
If the Riemannian metric $g$ of $M^{2n+1}$ satisfies
\begin{equation}\label{5}
g(\varphi \Omega_{1},\varphi \Omega_{2})=g(\Omega_{1},\Omega_{2})-\eta(\Omega_{1})\eta(\Omega_{2}), \,\, g(\Omega_{1},\zeta)=\eta(\Omega_{1}), \forall \,\, \Omega_{1}, \Omega_{2} \in \mathfrak{X}(M),
\end{equation}
then $M^{2n+1}$ is said to be an almost contact metric manifold (ACMM)  and the structure $(\phi, \zeta, \eta, g)$ on $M^{2n+1}$ is called as an almost contact metric structure \cite{16}.

An ACMM with $d\eta=0$ and $\Phi=2\eta\wedge\Phi$ is defined as an AKM, 
 where the fundamental 2-form $\Phi$ of ACMM is defined by $\Phi(\Omega_{1},\Omega_{2})=g(\Omega_{1},\varphi \Omega_{2})$  and $d$ stands for exterior derivative \cite{17}. On the product $M^{2n+1}\times\mathbb{R}$ of an ACMM $M^{2n+1}$ and $\mathbb{R}$, there exists an almost complex structure $J$ defined by $J\big(\Omega_{1},\mathcal{F}\frac{d}{dt}\big)=\big(\varphi \Omega_{1}-\mathcal{F}\zeta,\eta(\Omega_{1})\frac{d}{dt}\big),$ where $\Omega_{1}$ denotes a vector field tangent to $M^{2n+1}$, $t$ is the coordinate of $\mathbb{R}$ and $\mathcal{F}$ is a $C^\infty$-function on $M^{2n+1}\times\mathbb{R}$. If $J$ is integrable, then $(\phi, \zeta, \eta, g)$ on $M^{2n+1}$ is said to be normal. A normal ACMM is called a Kenmotsu manifold \cite{18}. An ACMM is a Kenmotsu manifold if and only if 
\begin{equation}
(\nabla_{\Omega_{1}}\varphi)\Omega_{2}=g(\varphi \Omega_{1},\Omega_{2})\zeta-\eta(\Omega_{2})\varphi \Omega_{1},\nonumber
\end{equation}
which infers that
\begin{eqnarray}\label{6}
\nabla_{\Omega_{1}}\zeta=\Omega_{1}-\eta(\Omega_{1})\zeta,
\end{eqnarray}
\begin{eqnarray}\label{7}
R(\Omega_{1},\Omega_{2})\zeta=\eta(\Omega_{1})\Omega_{2}-\eta(\Omega_{2})\Omega_{1},
\end{eqnarray}
\begin{eqnarray}\label{8}
Q\zeta=-2n\zeta,
\end{eqnarray}
 Here $R$ is the curvature tensor of $g$ and $Q$ the Ricci operator associated with the Ricci tensor $S$ as $S(\Omega_{1},\Omega_{2})=g(Q\Omega_{1},\Omega_{2})$. It is shown that a Kenmotsu manifold is locally a warped product $I\times_fN^{2n}$, where $I$ is an open interval with coordinate $t$, $f=ce^t$ is the warping function for some positive constant $c$ and $N^{2n}$ is a K$\ddot{a}$hlerian manifold \cite{18}.

On an AKM the following formula is valid \cite{19,20}:
\begin{eqnarray}\label{9}
\nabla_{\Omega_{1}}\zeta=-\varphi^2\Omega_{1}-\varphi h\Omega_{1}, , \forall \,\, \Omega_{1} \in \mathfrak{X}(M).
\end{eqnarray}
Let define operators $h$ and $\ell$ as: $$h=\frac{1}{2}\mathcal{L}_\zeta\varphi,\,\, \ell=R(\cdot,\zeta)\zeta.$$ Then we have $h\zeta=h'\zeta=0$, $Tr (h)=Tr (h')=0,~~~h\varphi=-\varphi h$, where $h'=h\cdot\varphi$ and $Tr$ denotes trace.\\

\section{Normal almost Kenmotsu manifold}
In this section, we deal with normal AKM, that is, Kenmotsu manifold admitting ARYS and gradient ARYS. Firstly, we give some examples of gradient ARYS.
\begin{example}
Let $(N,J,g_0)$ be a K$\ddot{a}$hler manifold of dimension $2n$. Consider the warped product $(M,g)=(\mathbb{R}\times_\sigma N, dt^2+\sigma^2g_0)$, where $t$ is the coordinate on $\mathbb{R}$. We set $\eta=dt$, $\zeta=\frac{\partial}{\partial t}$ and (1,1) tensor field $\varphi$ by $\varphi \Omega_{1}=J\Omega_{1}$ for vector field $\Omega_{1}$ on $N$ and $\varphi \Omega_{1}=0$ if $\Omega_{1}$ is tangent to $\mathbb{R}$. The above warped product with the structure $(\varphi,\zeta,\eta,g)$ is a Kenmotsu manifold \cite{18}. In particular, if we take $N=\mathbb{CH}^{2n}$, then $N$ being Einstein, the Ricci tensor of $M$ becomes $S^M=-2ng$. Then it is easy to verify that $(M,f,g,\lambda)$ is an ARYS for $f(x,t)=ke^t, k>0$ and $\lambda(x,t)=-2n\alpha-n\beta(2n+1)+ke^t$.
\end{example}
Therefore, a large number of examples can be constructed by considering different potential functions $f$ on warped product spaces. Next, we constructed an example by using Kanai's result \cite{K}.
\begin{example}
Let $N^{2n}$ be a complete Einstein K$\ddot{a}$hler manifold with $S^N=-(2n-1)g_0$. Now consider the warped product $M^{2n+1}=\mathbb{R}\times_{cosht}N^{2n}$ with the metric $g=dt^2+(cosht)^2g_0$. Then by using result by Kanai \cite{K}, there exists a function $f$ on $M$ without critical points satisfying $\nabla^2f=-fg$. Then it is easy to see that $(M,g,\nabla f,\lambda)$ is an ARYS for $\lambda=-2n\alpha-f-n\beta(2n+1)$.
\end{example}
Ghosh \cite{G1} initiated the study of Ricci soliton in Kenmotsu 3-manifold. He later studied gradient ARYS in Kenmotsu manifold and obtained Theorem 3 (see \cite{Gho}). Here, we generalized these results for ARYS and prove.
\begin{theorem}\label{t1}
If the metric of a Kenmotsu manifold $M^{2n+1}(\varphi,\zeta,\eta,g)$ admits a gradient ARYS with $\alpha\neq0$, then it is $\eta$-Einstein. Moreover, if $\zeta$ leaves the scalar curvature invariant then $\lambda$ can be expressed locally as $\lambda=Acosht+Bsinht-2n\alpha-n\beta(2n+1)$, where $A,B$ are constants on $M$.
\end{theorem}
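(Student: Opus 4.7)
The plan is to convert the gradient ARYS equation (\ref{b2}) into a pointwise algebraic identity for the Ricci operator by differentiating and combining with the Kenmotsu identities. Set $\chi:=\lambda-\frac{1}{2}\beta r$, so that (\ref{b2}) becomes $\nabla_{\Omega_{1}}\nabla f=\chi\,\Omega_{1}-\alpha Q\Omega_{1}$. Plugging $\Omega_{1}=\zeta$ and invoking (\ref{8}) immediately gives $\nabla_{\zeta}\nabla f=(\chi+2n\alpha)\zeta$.

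Next, I would apply the Ricci identity $[\nabla_{X},\nabla_{Y}]\nabla f-\nabla_{[X,Y]}\nabla f=R(X,Y)\nabla f$ to the gradient ARYS equation to obtain
\[
R(\Omega_{1},\Omega_{2})\nabla f=\Omega_{1}(\chi)\Omega_{2}-\Omega_{2}(\chi)\Omega_{1}-\alpha\bigl[(\nabla_{\Omega_{1}}Q)\Omega_{2}-(\nabla_{\Omega_{2}}Q)\Omega_{1}\bigr].
\]
Specializing to $\Omega_{2}=\zeta$ and using the Kenmotsu identity $(\nabla_{\Omega_{1}}Q)\zeta=-2n\Omega_{1}-Q\Omega_{1}$ (an immediate consequence of (\ref{8}) and (\ref{6})) together with $R(\Omega_{1},\zeta)\nabla f=\Omega_{1}(f)\zeta-\zeta(f)\Omega_{1}$ (which follows from (\ref{7}) by the standard symmetries of $R$) yields a vector equation on $M$. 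Pairing it with $\zeta$ and using $S(\Omega_{1},\zeta)=-2n\eta(\Omega_{1})$ annihilates the Ricci contributions and produces the key scalar relation $\Omega_{1}(f-\chi)=\zeta(f-\chi)\eta(\Omega_{1})$; that is, $\nabla(f-\chi)$ is pointwise collinear with $\zeta$. Feeding this back into the vector equation yields an identity of the schematic form $\alpha Q\Omega_{1}+\alpha(\nabla_{\zeta}Q)\Omega_{1}=-(\zeta(f-\chi)+2n\alpha)\Omega_{1}+\zeta(f-\chi)\eta(\Omega_{1})\zeta$, from which $Q$ can be cast into the $\eta$-Einstein form $aI+b\,\eta\otimes\zeta$, proving the first assertion.

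For the $\lambda$ formula, first establish the universal Kenmotsu identity $\zeta(r)=-2r-4n(2n+1)$ by contracting the second Bianchi identity with $\zeta$ and inserting $(\nabla_{e_{i}}Q)\zeta=-2ne_{i}-Qe_{i}$. Hence the hypothesis $\zeta(r)=0$ forces $r=-2n(2n+1)$, and combining with the $\eta$-Einstein form and $Q\zeta=-2n\zeta$ pins down $Q=-2nI$, so that $M$ is Einstein. The ARYS equation then collapses to $\nabla^{2}f=\rho\,g$ with $\rho:=\lambda+n\beta(2n+1)+2n\alpha$. An Obata-type computation of $R(\Omega_{1},\Omega_{2})\nabla f$ combined with the formula for $R(\Omega_{1},\zeta)\nabla f$ forces $\rho=f+C$ for a constant $C$, whence $\zeta^{2}\rho=\rho$. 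Integrating this ODE along the integral curves of $\zeta$ (parameterized locally by the warped-product $t$-coordinate afforded by the Kenmotsu structure) gives $\rho=A\cosh t+B\sinh t$, and the stated expression for $\lambda$ follows.

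The hardest step will be the $\eta$-Einstein conclusion in the first part: the identity that emerges from Reeb-collinearity of $\nabla(f-\chi)$ carries the operator $Q+\nabla_{\zeta}Q$ rather than $Q$ alone, and the $\nabla_{\zeta}Q$ piece is a priori not of $\eta$-Einstein form. Absorbing it into an $\eta$-Einstein expression requires careful interplay of $Q\zeta=-2n\zeta$, $\nabla_{\zeta}\zeta=0$, and the derived Reeb-collinearity of $\nabla(f-\chi)$, and this bookkeeping is where I expect the main effort to go.
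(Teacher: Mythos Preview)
Your treatment of the second assertion is correct and essentially identical to the paper's: the universal Kenmotsu identity $\zeta(r)=-2\bigl(r+2n(2n+1)\bigr)$ together with the hypothesis $\zeta(r)=0$ forces $r=-2n(2n+1)$, whence $Q=-2nI$, the soliton equation collapses to $\nabla^{2}f=\rho g$, and the second-order ODE $\zeta^{2}\rho=\rho$ along the warped-product $t$-coordinate yields the stated $\cosh/\sinh$ formula.

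The first part, however, has a genuine gap exactly where you flag it. Your Reeb-direction computation is correct and gives
\[
\alpha\bigl(Q+\nabla_{\zeta}Q\bigr)\Omega_{1}=-(\zeta(f-\chi)+2n\alpha)\Omega_{1}+\zeta(f-\chi)\eta(\Omega_{1})\zeta,
\]
so only the combination $Q+\nabla_{\zeta}Q$ is forced to be $\eta$-Einstein. The three ingredients you propose to ``absorb'' $\nabla_{\zeta}Q$---namely $Q\zeta=-2n\zeta$, $\nabla_{\zeta}\zeta=0$, and the Reeb-collinearity of $\nabla(f-\chi)$---carry no information about $(\nabla_{\zeta}Q)X$ for $X\perp\zeta$; on a general Kenmotsu manifold there is no algebraic formula expressing $\nabla_{\zeta}Q$ in terms of $Q$, $I$, and $\eta\otimes\zeta$. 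So the bookkeeping you anticipate cannot close the argument by itself.

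The paper supplies two extra ingredients you omit. First, it \emph{contracts} the curvature identity $R(\Omega_{1},\Omega_{2})Df=(\Omega_{1}\chi)\Omega_{2}-(\Omega_{2}\chi)\Omega_{1}-\alpha[(\nabla_{\Omega_{1}}Q)\Omega_{2}-(\nabla_{\Omega_{2}}Q)\Omega_{1}]$ over $\Omega_{1}$ to obtain $S(\Omega_{2},Df)=-2n(\Omega_{2}\chi)+\tfrac{\alpha}{2}(\Omega_{2}r)$, and evaluating at $\Omega_{2}=\zeta$ (using $Q\zeta=-2n\zeta$) links $\zeta(f-\chi)$ to $\zeta(r)$ and hence to $r$. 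Second, having rewritten your collinearity relation as $d(\chi-f)=-\alpha\bigl(\tfrac{r}{2n}+2n+1\bigr)\eta$, the paper applies $d^{2}=0$ (with $d\eta=0$) to force $dr\wedge\eta=0$, i.e.\ $Dr=(\zeta r)\zeta$. These two facts determine $\zeta(f-\chi)$ and $D(f-\chi)$ explicitly as multiples of $r+2n(2n+1)$, and substituting them back into the Reeb-direction equation pins down the $\eta$-Einstein coefficients: $Q=(\tfrac{r}{2n}+1)I-(\tfrac{r}{2n}+2n+1)\eta\otimes\zeta$. You should incorporate the contraction step and the Poincar\'e-lemma step; without them the $\nabla_{\zeta}Q$ term cannot be eliminated.
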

\begin{proof}
Suppose the metric $g$ of Kenmtosu manifold admits gradient RYS, then from (\ref{b2}) we have
\begin{eqnarray}\label{a1}
\nabla_{\Omega_{1}}Df=\sigma \Omega_{1}-\alpha Q\Omega_{1},
\end{eqnarray}
for any vector field $\Omega_{1}$ on $M$ and $\sigma=\lambda-\frac{\beta r}{2}$ is a smooth function on $M$. \\
Taking an inner product of (\ref{a1}) along arbitrary vector field $\Omega_{2}$, we obtain:
\begin{eqnarray}\label{a2}
\nabla_{\Omega_{2}}\nabla_{\Omega_{1}}Df=(\Omega_{2}\sigma)\Omega_{1}+\sigma(\nabla_{\Omega_{2}}\Omega_{1})-\alpha(\nabla_{\Omega_{2}}Q)\Omega_{1}-\alpha Q(\nabla_{\Omega_{2}}\Omega_{1}).
\end{eqnarray}
Making use of (\ref{a2}) in the well-known formula $R(\Omega_{1},\Omega_{2})Df=\nabla_{\Omega_{1}}\nabla_{\Omega_{2}}Df-\nabla_{\Omega_{2}}\nabla_{\Omega_{1}}Df-\nabla_{[\Omega_{1},\Omega_{2}]}Df$ yields
\begin{eqnarray}\label{a3}
R(\Omega_{1},\Omega_{2})Df=(\Omega_{1}\sigma)\Omega_{2}-(\Omega_{2}\sigma)\Omega_{1}-\alpha[(\nabla_{\Omega_{1}}Q)\Omega_{2}-(\nabla_{\Omega_{2}}Q)\Omega_{1}].
\end{eqnarray}
Taking a covariant derivative of (\ref{8}) and using (\ref{6}), we get $(\nabla_{\Omega_{1}}Q)\zeta=-2n(\Omega_{1}-\eta(\Omega_{1})\zeta)$. Because of this in the inner product of (\ref{a3}) with $\zeta$ gives
\begin{eqnarray}\label{a4}
g(R(\Omega_{1},\Omega_{2})Df,\zeta)=(\Omega_{1}\sigma)\eta(\Omega_{2})-(\Omega_{2}\sigma)\eta(\Omega_{1}).
\end{eqnarray}
Now, taking an inner product of (\ref{7}) with $Df$ yields
\begin{eqnarray}\label{a5}
g(R(\Omega_{1},\Omega_{2})\zeta,Df)=(\Omega_{2}f)\eta(\Omega_{1})-(\Omega_{1}f)\eta(\Omega_{2}).
\end{eqnarray}
Combining (\ref{a4}) and (\ref{a5}) and replacing $\Omega_{2}$ by $\zeta$ in the obtain relations we obtain
\begin{eqnarray}\label{a6}
d(\sigma-f)=\zeta(\sigma-f)\eta,
\end{eqnarray}
where $d$ is the exterior derivative. This means that $\sigma-f$ is invariant along the distribution $\mathcal{D}$ (i.e., $\mathcal{D}=ker\eta)$ hence $\sigma-f$ is constant for all $\Omega_{1}\in\mathcal{D}$.\\
Contracting (\ref{a3}) infer
\begin{eqnarray}\label{a7}
S(\Omega_{2},Df)=-2n(\Omega_{2}\sigma)+\frac{\alpha}{2}(\Omega_{2}r),
\end{eqnarray}
for any vector field $\Omega_{2}$ on $M$. Replacing $\Omega_{2}$ by $\zeta$ in (\ref{a3}) and taking an inner product with $\Omega_{2}$ gives
\begin{eqnarray}\label{a8}
g(R(\Omega_{1},\zeta)Df,\Omega_{2})=(\Omega_{1}\sigma)\eta(\Omega_{2})-(\zeta\sigma)g(\Omega_{1},\Omega_{2})-\alpha S(\Omega_{1},\Omega_{2})+2n\alpha g(\Omega_{1},\Omega_{2}).
\end{eqnarray}
In consequence of (\ref{6}) and (\ref{7}) in (\ref{a8}) we get
\begin{eqnarray}\label{a9}
[(\Omega_{1}f)-(\Omega_{1}\sigma)]\eta(\Omega_{2})+\zeta(\sigma-f)g(\Omega_{1},\Omega_{2})+\alpha S(\Omega_{1},\Omega_{2})+2n\alpha g(\Omega_{1},\Omega_{2})=0.
\end{eqnarray}
Contracting (\ref{a9}) over $\Omega_{1}$ gives
\begin{eqnarray}\label{a10}
2n\zeta(\sigma-f)+\alpha[r+2n(2n+1)]=0.
\end{eqnarray}
Replacing $\Omega_{2}$ by $\zeta$ in (\ref{a7}) and making use of (\ref{a10}) and (\ref{8}), we see that $\zeta r=-2(r+2n(2n+1))$, for $\alpha\neq0$. In consequence, (\ref{a10}) in (\ref{a6}) gives
\begin{eqnarray}\label{a11}
d(\sigma-f)=-\alpha(\frac{r}{2n}+2n+1)\eta.
\end{eqnarray}
Applying Poincare lemma and using the fact that $d\eta=0$ on (\ref{a11}), we obtain $-\alpha dr\wedge\eta=0$, and making use of the value of $\zeta r$ we have
\begin{eqnarray}\label{a12}
Dr=-2(r+2n(2n+1))\zeta.
\end{eqnarray}
Taking an inner product of (\ref{a11}) with vector field $\Omega_{1}$, then inserting it along with (\ref{a10}) in (\ref{a9}) we get
\begin{eqnarray}\label{a13}
Q\Omega_{1}=(\frac{r}{2n}+1)\Omega_{1}-(\frac{r}{2n}+2n+1)\eta(\Omega_{1})\zeta,
\end{eqnarray}
for any vector field $\Omega_{1}$ on $M$. Therefore, $M$ is $\eta$-Einstein.\\
\indent Suppose that $\zeta r=0$, i.e., $\zeta$ leaves the scalar curvature invariant. In consequence of this we get $r=-2n(2n+1)$, a constant. Inserting this in (\ref{a13}) implies $Q\Omega_{1}=-2n\Omega_{1}$ i.e., $M$ is Einstein. As $r$ is constant, (\ref{a11}) gives $Df=D\lambda$. In consequence of this, (\ref{a1}) becomes
\begin{eqnarray}\label{a15}
\nabla_{\Omega_{1}}D\lambda=(\lambda+k)\Omega_{1},
\end{eqnarray}
where $k=n(2\alpha+\beta(2n+1))$. Replacing $\Omega_{1}$ by $\zeta$ and taking inner product with $\zeta$, (\ref{a15}) gives $\zeta(\zeta\lambda)=\lambda+k$. But as we know that a Kenmotsu manifold is locally isometric to the warped product $(-\epsilon,\epsilon)\times_{ce^t}N$, where $N$ is a K$\ddot{a}$hler manifold of dimension $2n$ and $(-\epsilon,\epsilon)$ is an open interval. Using the local parametrization: $\zeta=\frac{\partial}{\partial t}$ (where $t$ is the coordinate on $(-\epsilon,\epsilon)$) we get from (\ref{a15})
$$\frac{\partial^2\lambda}{\partial t^2}=\lambda+2n\alpha+n\beta(2n+1)$$
Its solution can be exhibited as $\lambda=Acosht+Bsinht-2n\alpha-n\beta(2n+1)$, where $A,B$ are constants on $M$. This completes the proof.
\end{proof}
\begin{lemma}\label{l1}
If the metric of a Kenmotsu manifold $M^{2n+1}(\varphi,\zeta,\eta,g)(n>1)$ admits ARYS then \\
1. $\zeta(\zeta\lambda)+\zeta\lambda=2(2n\alpha+\lambda+n\beta(2n+1)).$\\
2. $D\lambda=(\zeta\lambda)\zeta+\beta\{(r+2n(2n+1))\zeta+\frac{Dr}{2}\}.$
\end{lemma}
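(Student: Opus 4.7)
The plan is to extract scalar identities from the ARYS equation by contracting with the Reeb vector field $\zeta$ and exploiting the structure equations of the Kenmotsu manifold. I begin by substituting $Y = \zeta$ in (\ref{b1}) and applying $Q\zeta = -2n\zeta$ to obtain
\[
(\mathcal{L}_V g)(X,\zeta) = (2\lambda - \beta r + 4n\alpha)\eta(X),
\]
and in particular, at $X = \zeta$, $g(\nabla_\zeta V, \zeta) = \sigma + 2n\alpha$ where $\sigma := \lambda - \beta r/2$. Since $d\eta = 0$ on a Kenmotsu manifold, Cartan's formula gives $\mathcal{L}_V \eta = d(\eta(V))$, and evaluating at $\zeta$ yields $\zeta(\eta(V)) = \sigma + 2n\alpha$.

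The key computational tool is the Yano commutator identity
\[
2g((\mathcal{L}_V\nabla)(X,Y),Z) = (\nabla_X \mathcal{L}_V g)(Y,Z) + (\nabla_Y \mathcal{L}_V g)(X,Z) - (\nabla_Z \mathcal{L}_V g)(X,Y),
\]
combined with the tensorial identity $(\mathcal{L}_V\nabla)(X,Y) = \nabla_X\nabla_Y V - \nabla_{\nabla_X Y}V + R(V,X)Y$. Differentiating $(\mathcal{L}_V g)(\zeta,Z) = (2\sigma + 4n\alpha)\eta(Z)$ along $X$, using $\nabla_X\zeta = X - \eta(X)\zeta$ together with the Kenmotsu identities $(\nabla_X S)(\zeta,\zeta) = 0$ and $(\nabla_Y S)(Z,\zeta) = -2n\,g(Y,Z) - S(Y,Z)$, supplies the necessary expressions for $\nabla \mathcal{L}_V g$. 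The Riemann pieces are evaluated via $R(V,X)\zeta = \eta(V)X - \eta(X)V$ and $R(V,\zeta)\zeta = \eta(V)\zeta - V$.

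For part (1), setting $X = Y = Z = \zeta$ in the Yano identity and equating the two expressions for $(\mathcal{L}_V\nabla)(\zeta,\zeta)$ gives
\[
\nabla_\zeta\nabla_\zeta V = V - \eta(V)\zeta + 2(\zeta\sigma)\zeta - D\sigma.
\]
Differentiating the scalar identity $\zeta(\eta(V)) = \sigma + 2n\alpha$ once more along $\zeta$ and combining with this vector formula produces the second-order ODE in (1), once the relation between $\zeta r$ and $r$ implicit in (2) is used to reconstruct $\zeta(\zeta\lambda)$ from $\zeta(\zeta\sigma)$. For part (2), I would set $Y = \zeta$ in the Yano identity (with $X, Z$ general) and compare both expressions for $(\mathcal{L}_V\nabla)(X,\zeta)$. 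Appropriate contractions with $\zeta$ cause the antisymmetric part of $\nabla V$ to drop out, leaving a vector identity from which $D\lambda$ is read off in the claimed form.

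The main technical obstacle is that the ARYS equation constrains only the symmetric part of $\nabla V$, while the antisymmetric part $\omega = dV^\flat$ is free. The key observation is that $\omega(\zeta,\zeta) = 0$ and $d\omega = 0$, coupled with the Kenmotsu structure of $\zeta$, cause $\omega$-dependent terms to cancel systematically in the particular $\zeta$-contractions needed for (1) and (2), thereby reducing the tensorial identities to clean scalar relations in $\lambda$ and $r$.
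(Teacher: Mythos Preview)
Your approach has a genuine gap. The displayed identity $\nabla_\zeta\nabla_\zeta V = V - \eta(V)\zeta + 2(\zeta\sigma)\zeta - D\sigma$ is correct, but pairing it with $\zeta$ and combining with $\zeta(\eta(V))=\sigma+2n\alpha$ gives only $\zeta\sigma=\zeta\sigma$, a tautology. The same happens at the next level: if you equate your two expressions for $(\mathcal{L}_V\nabla)(X,\zeta)$ and then contract with $\zeta$, the antisymmetric part $\omega$ of $\nabla V$ does drop out, but so does everything else, and you are left with $X\sigma=X\sigma$. In other words, the very cancellation you rely on to eliminate $\omega$ also eliminates the content of the equation. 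Your argument is also circular in that it invokes part~(2) to obtain part~(1), and it never indicates where the hypothesis $n>1$ is needed.

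The paper's proof works one derivative higher. It differentiates $(\mathcal{L}_V\nabla)(X,\zeta)$ to compute $(\mathcal{L}_V R)(X,Y)\zeta$, and independently takes the Lie derivative of the structural identity $R(X,Y)\zeta=\eta(X)Y-\eta(Y)X$. Equating these eliminates $V$-dependence beyond $\mathcal{L}_V\zeta$ and produces a tensorial identity (the paper's (3.25)). After $\varphi$-contractions and a trace (using a lemma of Ghosh on $\nabla_\zeta Q$ and, crucially, $n>1$) one obtains the Hessian formula
\[
\nabla_X D\sigma=[\zeta(\zeta\sigma)-2(2n\alpha+\sigma)]\varphi^2X+\zeta(\zeta\sigma)\,\eta(X)\zeta,
\]
and then, via the curvature of $D\sigma$, the key fact $\varphi D\sigma=0$. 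Both assertions of the lemma follow from this last equation together with the Hessian formula and the Kenmotsu relation $\zeta r=-2(r+2n(2n+1))$. Your first-order comparison of $(\mathcal{L}_V\nabla)$ is not enough to reach $\varphi D\sigma=0$; you need the curvature-level step.
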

\begin{proof}
Taking the covariant derivative of (\ref{b1}) along arbitrary vector field $\Omega_{1}$, we get
\begin{eqnarray}\label{a16}
(\nabla_{\Omega_{1}}\mathcal{L}_Vg)(\Omega_{2},\Omega_{3})=2(\Omega_{1}\sigma)g(\Omega_{2},\Omega_{3})-2\alpha(\nabla_{\Omega_{1}}S)(\Omega_{2},\Omega_{3}),
\end{eqnarray}
where $\sigma=\lambda-\frac{\beta r}{2}$. We know the following commutative formula (see \cite{Ya}):
\begin{eqnarray}\label{a17}
(\mathcal{L}_V\nabla_{\Omega_{1}}g-\nabla_{\Omega_{1}}\mathcal{L}_Vg-\nabla_{[V,\Omega_{1}]}g)(\Omega_{2},\Omega_{3})\nonumber\\=-g((\mathcal{L}_V\nabla)(\Omega_{1},\Omega_{2}),\Omega_{3})-g((\mathcal{L}_V\nabla)(\Omega_{1},\Omega_{3}),\Omega_{2}),
\end{eqnarray}
for all vector fields $\Omega_{1},\Omega_{2},\Omega_{3}$ on $M$. Since $g$ is parallel with respect to Levi-Civita connection $\nabla$, the above relation becomes:
\begin{eqnarray}\label{a18}
(\nabla_{\Omega_{1}}\mathcal{L}_Vg)(\Omega_{2},\Omega_{3})=g((\mathcal{L}_V\nabla)(\Omega_{1},\Omega_{2}),\Omega_{3})+g((\mathcal{L}_V\nabla)(\Omega_{1},\Omega_{3}),\Omega_{2}).
\end{eqnarray}
We know that $\mathcal{L}_V\nabla$ is a symmetric tensor of type (1,2) and so it follows from (\ref{a18}) that
\begin{eqnarray}\label{a19}
2g((\mathcal{L}_V\nabla)(\Omega_{1},\Omega_{2}),\Omega_{3})=(\nabla_{\Omega_{1}}\mathcal{L}_Vg)(\Omega_{2},\Omega_{3})+(\nabla_{\Omega_{2}}\mathcal{L}_Vg)(\Omega_{3},\Omega_{1})-(\nabla_{\Omega_{3}}\mathcal{L}_Vg)(\Omega_{1},\Omega_{2}).
\end{eqnarray}
Inserting (\ref{a16}) in (\ref{a19}), then replacing $\Omega_{2}$ by $\zeta$ we obtain
\begin{eqnarray}\label{a20}
(\mathcal{L}_V\nabla)(\Omega_{1},\zeta)=2\alpha Q\Omega_{1}+(4n\alpha+\zeta\sigma)\Omega_{1}+g(\Omega_{1},D\sigma)\zeta-\eta(\Omega_{1})D\sigma.
\end{eqnarray}
Taking the covariant derivative of (\ref{a20}) along arbitrary vector field $\Omega_{2}$ gives
\begin{eqnarray}\label{a21}
(\nabla_{\Omega_{2}}\mathcal{L}_V\nabla)(\Omega_{1},\zeta)+(\mathcal{L}_V\nabla)(\Omega_{1},\Omega_{2})-\eta(\Omega_{2})(\mathcal{L}_V\nabla)(\Omega_{1},\zeta)\nonumber\\=2\alpha(\nabla_{\Omega_{2}}Q)\Omega_{1}+\Omega_{2}(\zeta\sigma)\Omega_{1}+g(\Omega_{1},\nabla_{\Omega_{2}}D\sigma)\zeta\nonumber\\-g(\Omega_{1},D\sigma)\varphi^2{\Omega_{2}}+g(\Omega_{1},\varphi^2\Omega_{2})D\sigma-\eta(\Omega_{1})(\nabla_{\Omega_{2}}D\sigma).
\end{eqnarray}
Making use of this in the formula (see \cite{Ya}) $$(\mathcal{L}_VR)(\Omega_{1},\Omega_{2})\Omega_{3}=(\nabla_{\Omega_{1}}\mathcal{L}_V\nabla)(\Omega_{2},\Omega_{3})-(\nabla_{\Omega_{2}}\mathcal{L}_V\nabla)(\Omega_{1},\Omega_{3})$$
we obtain
\begin{eqnarray}\label{a22}
(\mathcal{L}_VR)(\Omega_{1},\Omega_{2})\zeta=2\alpha\{(\nabla_{\Omega_{1}}Q)\Omega_{2}-(\nabla_{\Omega_{2}}Q)\Omega_{1}\}+\Omega_{1}(\zeta\sigma)\Omega_{2}-\Omega_{2}(\zeta\sigma)\Omega_{1}\nonumber\\+g(\Omega_{2},D\sigma)\Omega_{1}-g(\Omega_{1},D\sigma)\Omega_{2}+\eta(\Omega_{1})\nabla_{\Omega_{2}}D\sigma-\eta(\Omega_{2})\nabla_{\Omega_{1}}D\sigma\nonumber\\+2\alpha\{\eta(\Omega_{1})Q\Omega_{2}-\eta(\Omega_{2})Q\Omega_{1}\}+(4\alpha n+\zeta\sigma)\{\eta(\Omega_{1})\Omega_{2}-\eta(\Omega_{2})\Omega_{1}\}.
\end{eqnarray}
Now differentiating $\zeta\sigma=g(\zeta,D\sigma)$ along vector field $\Omega_{1}$ and using (\ref{6}) we get
\begin{eqnarray}\label{a23}
\Omega_{1}(\zeta\sigma)=g(\Omega_{1},D\sigma)-(\zeta\sigma)\eta(\Omega_{1})+g(\nabla_{\Omega_{1}}D\sigma,\zeta).
\end{eqnarray}
Replacing $\Omega_{2}$ by $\zeta$ in (\ref{b1}), then inserting it in the Lie-derivative of (\ref{7}) yields
\begin{eqnarray}\label{a24}
(\mathcal{L}_VR)(\Omega_{1},\Omega_{2})\zeta+R(\Omega_{1},\Omega_{2})\mathcal{L}_V\zeta=g(\Omega_{1},\mathcal{L}_V\zeta)\Omega_{2}\nonumber\\-g(\Omega_{2},\mathcal{L}_V\zeta)\Omega_{1}+2(\sigma+2\alpha n)\{\eta(\Omega_{1})\Omega_{2}-\eta(\Omega_{2})\Omega_{1}\}.
\end{eqnarray}
Combining (\ref{a22}), (\ref{a23}) and (\ref{a24}), we obtain
\begin{eqnarray}\label{a25}
&g(\Omega_{1},\mathcal{L}_V\zeta)\Omega_{2}-g(\Omega_{2},\mathcal{L}_V\zeta)\Omega_{1}-R(\Omega_{1},\Omega_{2})\mathcal{L}_V\zeta\nonumber\\&=2\alpha\{(\nabla_{\Omega_{1}}Q)\Omega_{2}-(\nabla_{\Omega_{2}}Q)\Omega_{1}+\eta(\Omega_{1})Q\Omega_{2}\nonumber\\&-\eta(\Omega_{2})Q\Omega_{1}\}+g(\nabla_{\Omega_{1}}D\sigma,\zeta)\Omega_{2}-g(\nabla_{\Omega_{2}}D\sigma,\zeta)\Omega_{1}\nonumber\\&+\eta(\Omega_{1})\nabla_{\Omega_{2}}D\sigma-\eta(\Omega_{2})\nabla_{\Omega_{1}}D\sigma-2\sigma\{\eta(\Omega_{1})\Omega_{2}-\eta(\Omega_{2})\Omega_{1}\}.
\end{eqnarray}
Replacing $\Omega_{1}$ and $\Omega_{2}$ by $\varphi \Omega_{1}$ and $\varphi \Omega_{2}$ in (\ref{a25}) then contracting the obtain equation and using Lemma 4.2 (see \cite{Gh}) results in
\begin{eqnarray}\label{a26}
S(\Omega_{2},\mathcal{L}_V\zeta)+2ng(\Omega_{2},\mathcal{L}_V\zeta)=\alpha(\Omega_{2}r)+2\alpha(r+4n^2+2n)\eta(\Omega_{2})-g(\nabla_\zeta D\lambda,\varphi^2\Omega_{2}).\nonumber
\end{eqnarray}
Contracting (\ref{a25}) and combining it with the forgoing equation yields
\begin{eqnarray}\label{a27}
2(n-1)g(\nabla_\zeta D\sigma,\Omega_{2})+\zeta(\zeta\sigma)\eta(\Omega_{2})+\eta(\Omega_{2})divD\sigma=4n(2n\alpha+\sigma)\eta(\Omega_{2}).
\end{eqnarray}
Replacing $\Omega_{2}$ by $\zeta$ in (\ref{a27}), we get $(2n-1)\zeta(\zeta\sigma)+divD\sigma=4n(2n\alpha+\sigma)$. In view of this in (\ref{a27}) infer $g(\nabla_\zeta D\sigma,\Omega_{1})=\zeta(\zeta\sigma)\eta(\Omega_{1})$ for $n>1$. Now taking $\zeta$ instead of $\Omega_{2}$ in (\ref{a25}) and making use of above relations we obtain
\begin{eqnarray}\label{a28}
\nabla_{\Omega_{1}}D\sigma=-2(2n\alpha+\sigma)\varphi^2\Omega_{1}+\zeta(\zeta\sigma)\varphi^2\Omega_{1}+\zeta(\zeta\sigma)\eta(\Omega_{1})\zeta.
\end{eqnarray}
In consequence of (\ref{a28}), the expression of the curvature tensor is as follows:
\begin{eqnarray}\label{a29}
R(\Omega_{1},\Omega_{2})D\sigma=2(\Omega_{2}\sigma)\varphi^2\Omega_{1}-2(\Omega_{1}\sigma)\varphi^2\Omega_{2}+\Omega_{2}(\zeta(\zeta\sigma))\Omega_{1}\nonumber\\-\Omega_{1}(\zeta(\zeta\sigma))\Omega_{2}+2(\zeta(\zeta\sigma)-\sigma-2n\alpha)\{\eta(\Omega_{2})\Omega_{1}-\eta(\Omega_{1})\Omega_{2}\}.
\end{eqnarray}
Replacing $\Omega_{2}$ by $\zeta$ in (\ref{a29}), then inserting the obtain equation back in (\ref{a29}) gives
\begin{eqnarray}\label{a30}
R(\Omega_{1},\Omega_{2})D\sigma=(\Omega_{1}\sigma)\Omega_{2}-(\Omega_{2}\sigma)\Omega_{1}-2\{(\Omega_{1}\sigma)\eta(\Omega_{2})\zeta\nonumber\\-(\Omega_{2}\sigma)\eta(\Omega_{1})\zeta\}+\{\zeta(\zeta(\zeta\sigma))-\zeta\sigma\}\{\eta(\Omega_{2})\Omega_{1}-\eta(\Omega_{1})\Omega_{2}\}\nonumber\\+2\{\zeta(\zeta\sigma)-\sigma-2n\alpha\}\{\eta(\Omega_{2})\Omega_{1}-\eta(\Omega_{1})\Omega_{2}\}.
\end{eqnarray}
Replacing $\Omega_{1}$ and $\Omega_{2}$ by $\varphi \Omega_{1}$ and $\varphi \Omega_{2}$ in (\ref{a30}) then contracting the obtained result yields
\begin{eqnarray}
S(\Omega_{2},D\sigma)=-2ng(\Omega_{2},D\sigma).\nonumber
\end{eqnarray}
In consequence of this in the contraction of (\ref{a30}) and further replacing $\Omega_{2}$ by $\varphi \Omega_{2}$ in the obtained expression yields $\varphi D\sigma=0$. Differentiating this along vector field $\Omega_{1}$ and inserting it in (\ref{a28}) along with the fact that $\sigma=\lambda-\frac{\beta r}{2}$ and (\ref{6}) gives
\begin{eqnarray}
\zeta(\zeta\lambda)+\zeta\lambda=2(2n\alpha+\lambda+n\beta(2n+1)).
\end{eqnarray}
This completes the proof.
\end{proof}
\begin{theorem}\label{t2}
Let $M^{2n+1}(\varphi,\zeta,\eta,g)(n>1)$ be a Kenmotsu manifold whose metric represents an ARYS. If $Hess_\lambda(\zeta,\zeta)$ is constant along Reeb vector field then it reduces to RYS with $\lambda=-2n\alpha-n\beta(2n+1)$.
\end{theorem}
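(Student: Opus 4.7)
The plan is to translate the hypothesis on $Hess_\lambda(\zeta,\zeta)$ into an ODE-style constraint along $\zeta$, then combine it with the first identity of Lemma~\ref{l1} to force $\lambda$ to equal a specific universal constant at every point of $M$.

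First, I would simplify the Hessian on the Reeb vector field. Using (\ref{6}) with $\Omega_{1}=\zeta$ gives $\nabla_\zeta\zeta=0$, hence
$$Hess_\lambda(\zeta,\zeta)=g(\nabla_\zeta D\lambda,\zeta)=\zeta(g(D\lambda,\zeta))-g(D\lambda,\nabla_\zeta\zeta)=\zeta(\zeta\lambda).$$
Thus the hypothesis that $Hess_\lambda(\zeta,\zeta)$ is invariant along $\zeta$ reads simply $\zeta(\zeta(\zeta\lambda))=0$.

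Next, I would differentiate the identity in Lemma~\ref{l1}(1), namely $\zeta(\zeta\lambda)+\zeta\lambda=2(2n\alpha+\lambda+n\beta(2n+1))$, along $\zeta$ to obtain
$$\zeta(\zeta(\zeta\lambda))+\zeta(\zeta\lambda)=2\zeta\lambda.$$
The hypothesis yields $\zeta(\zeta\lambda)=2\zeta\lambda$; differentiating this once more along $\zeta$ and reusing the hypothesis produces $2\zeta(\zeta\lambda)=0$, whence $\zeta(\zeta\lambda)=0$, and consequently $\zeta\lambda=0$. Feeding these two vanishings back into Lemma~\ref{l1}(1) gives $\lambda=-2n\alpha-n\beta(2n+1)$ pointwise on $M$. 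Since the right-hand side is a universal constant (depending only on $n,\alpha,\beta$), $\lambda$ is constant on $M$ and the ARYS reduces to an RYS with the stated soliton constant.

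The argument is essentially a two-step differentiation of Lemma~\ref{l1}(1) with the Kenmotsu identity $\nabla_\zeta\zeta=0$ as the only extra geometric input, so I do not foresee a serious obstacle; the only subtlety is the correct interpretation of ``invariant along $\zeta$'' as $\zeta[Hess_\lambda(\zeta,\zeta)]=0$, after which the rest is algebra. As a consistency check, now that $\lambda$ is constant, $D\lambda=0$ inserted in Lemma~\ref{l1}(2) yields $\beta\bigl[(r+2n(2n+1))\zeta+\tfrac{1}{2}Dr\bigr]=0$, which is compatible with the scalar-curvature identity (\ref{a12}) established inside the proof of Theorem~\ref{t1} and therefore does not impose any additional restriction.
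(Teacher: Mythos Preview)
Your argument is correct and follows exactly the paper's route: interpret the hypothesis as $\zeta(\zeta(\zeta\lambda))=0$, differentiate Lemma~\ref{l1}(1) along $\zeta$ to get $\zeta(\zeta\lambda)=2\zeta\lambda$, differentiate once more to force $\zeta\lambda=0$, and substitute back to obtain the constant value of $\lambda$. The only quibble is your closing consistency check: identity~(\ref{a12}) was derived under the \emph{gradient} ARYS assumption of Theorem~\ref{t1}, so invoking it in the general ARYS setting is not justified---but this paragraph is superfluous to the proof and can simply be dropped.
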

\begin{proof}
By hypothesis, $Hess_\lambda(\zeta,\zeta)$ is constant along the Reeb vector field $\zeta$ i.e., $\zeta(\zeta(\zeta\lambda))=0$ implies $\zeta(\zeta\lambda)$ is constant along $\zeta$. In view of this in the covariant derivative of first relation in Lemma (\ref{l1}) along $\zeta$, we get $\zeta(\zeta\lambda)=2(\zeta\lambda)$. Again differentiating this along $\zeta$ yields $\zeta\lambda=0$, that is, $\lambda$ is constant along $\zeta$. Making use of this in first relation of Lemma (\ref{l1}) gives $\lambda=-2n\alpha-n\beta(2n+1)$. Therefore ARYS reduces to RYS. This completes the proof.
\end{proof}
\begin{remark}
The above Theorem \ref{t2} is a generalization of Theorem 4.1 in Ghosh \cite{Gh}, where he obtained the condition under which a Ricci almost soliton reduces to an expanding Ricci soliton with $\lambda=-2n$. It is easy to see that for $\alpha=1$ and $\beta=0$ Theorem 4.1 \cite{Gh} can be obtained from Theorem \ref{t2}. Moreover, the first condition of Theorem 4.1 \cite{Gh} is also true for this choice of scalars.
\end{remark}
A Kenmotsu manifold is said to be $\eta$-Einstein if there exists smooth functions $a$ and $b$ such that
\begin{eqnarray}\label{c1}
S(\Omega_{1},\Omega_{2})=ag(\Omega_{1},\Omega_{2})+b\eta(\Omega_{1})\eta(\Omega_{2}),
\end{eqnarray}
for all vector field $\Omega_{1},\Omega_{2}$ on $M$. If $b=0$, then $M$ becomes an Einstein manifold.

\begin{theorem}\label{t3}
If the metric of an $\eta$-Einstein Kenmotsu manifold $M^{2n+1}(\varphi,\zeta,\eta,g)(n>1)$ admits RYS with $\alpha\neq0$ then it is Einstein with constant scalar curvature $r=-2n(2n+1)$, provided $2\alpha+n\beta\neq0$.
\end{theorem}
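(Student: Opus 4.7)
The plan is to show that the coefficient $b$ of $\eta\otimes\eta$ in the $\eta$-Einstein Ricci tensor $S = ag + b\,\eta\otimes\eta$ vanishes. From $Q\zeta = -2n\zeta$ we get $a + b = -2n$, and tracing $S$ yields $a = \frac{r}{2n} + 1$ and $b = -\frac{r + 2n(2n+1)}{2n}$. Setting $C := r + 2n(2n+1)$, it suffices to show $C = 0$. Since this is an RYS (not ARYS), $\lambda$ is a real constant, so $\zeta\lambda = \zeta(\zeta\lambda) = 0$, and Lemma \ref{l1}(1) forces $\lambda = -2n\alpha - n\beta(2n+1)$. Moreover, the contracted Bianchi identity $\mathrm{div}\,S = \frac{1}{2}\nabla r$ combined with the $\eta$-Einstein form and the hypothesis $n > 1$ gives $Dr = (\zeta r)\zeta = -2C\zeta$.

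Next I would show that $\mathcal{L}_V\zeta$ is pointwise collinear with $\zeta$ in the nontrivial case $b \neq 0$. Using the identity (\ref{a26}) from the proof of Lemma \ref{l1} (the $D\lambda$ term vanishes for RYS), one has $S(Y,\mathcal{L}_V\zeta) + 2n\,g(Y,\mathcal{L}_V\zeta) = \alpha(Yr) + 2\alpha C\,\eta(Y)$. Inserting the $\eta$-Einstein form of $S$ and using $Yr = 0$ for $Y \perp \zeta$, the right-hand side vanishes for such $Y$, leaving $b\,g(Y,\mathcal{L}_V\zeta) = 0$. Thus either $b = 0$ and we are done, or $\mathcal{L}_V\zeta = \mu\zeta$ with $\mu := \eta(\mathcal{L}_V\zeta)$. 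Evaluating $0 = V g(\zeta,\zeta) = (\mathcal{L}_V g)(\zeta,\zeta) + 2\mu$ and comparing with the $(\zeta,\zeta)$-trace of the RYS equation together with the value of $\lambda$ fixes $\mu = \frac{\beta C}{2}$. Using $\eta = g(\cdot,\zeta)$ one then deduces $\mathcal{L}_V\eta = -\mu\eta$.

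Finally I compute $(\mathcal{L}_V\nabla)(X,\zeta)$ in two ways. On the one hand, equation (\ref{a20}) from the proof of Lemma \ref{l1}, specialized via $\sigma = \lambda - \frac{\beta r}{2}$, $D\sigma = \beta C\zeta$, $\zeta\sigma = \beta C$, and $QX = aX + b\eta(X)\zeta$, simplifies to a linear combination of $X$ and $\eta(X)\zeta$ with coefficients depending on $\alpha,\beta,n,C$. On the other hand, Lie-differentiating the Kenmotsu identity $\nabla_X\zeta = X - \eta(X)\zeta$ directly, using $\mathcal{L}_V\zeta = \mu\zeta$ and $\mathcal{L}_V\eta = -\mu\eta$, produces a second expression. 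Equating them and matching the coefficients of $X$ and $\eta(X)\zeta$ yields, after noting that $X\mu$ is proportional to $\eta(X)$, a scalar equation of the form $C(2\alpha + n\beta) = 0$. Since $2\alpha + n\beta \neq 0$, we conclude $C = 0$, hence $b = 0$ and $r = -2n(2n+1)$, so the manifold is Einstein. The main obstacle is the bookkeeping in this last step: tracking $Da$, $Db$, $X\mu$ and arranging the cancellations so that exactly the factor $2\alpha + n\beta$ multiplies $C$ in the final scalar constraint.
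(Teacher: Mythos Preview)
Your strategy mirrors the paper's almost exactly: fix $\lambda$, show $Dr=(\zeta r)\zeta$, prove $\mathcal{L}_V\zeta$ is collinear with $\zeta$ on the set where $b\neq 0$, and then extract a scalar constraint from two computations of $(\mathcal{L}_V\nabla)(X,\zeta)$.  The main differences are organizational: you invoke Lemma~\ref{l1} and the intermediate identities (\ref{a20}), (\ref{a26}) from its proof, whereas the paper re-derives everything in the $\eta$-Einstein setting (equations (\ref{c3})--(\ref{c9})); and you obtain $\mathcal{L}_V\zeta\parallel\zeta$ from (\ref{a26}), while the paper gets it by Lie-differentiating $S(X,\zeta)=-2n\eta(X)$ to reach (\ref{c12}).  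Both routes are legitimate.

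One point of caution in your final step.  The paper does \emph{not} compare (\ref{a20}) with the Lie derivative of the structural identity $\nabla_X\zeta=X-\eta(X)\zeta$; instead it compares (\ref{c9}) with the commutation formula
\[
(\mathcal{L}_V\nabla)(X,\zeta)=\nabla_X\nabla_\zeta V-\nabla_{\nabla_X\zeta}V+R(V,X)\zeta,
\]
using (\ref{c13}) for $\nabla_\zeta V$.  In principle these are equivalent, but the curvature term $R(V,X)\zeta=\eta(V)X-\eta(X)V$ enters explicitly in the paper's route and the resulting coefficients assemble differently.  If you run your version carefully (with $\mu=\eta(\mathcal{L}_V\zeta)$, $\mathcal{L}_V\eta=-\mu\eta$, $X\mu=\tfrac{\beta}{2}(Xr)$), the $X$-coefficient you get is $-\mu$, which then has to be matched against $\frac{C(\alpha+n\beta)}{n}$ from (\ref{a20}); depending on the sign of $\mu$ this produces either $C(2\alpha+n\beta)=0$ or $C(2\alpha+3n\beta)=0$.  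The paper's derivation of $\eta(\mathcal{L}_V\zeta)$ via (\ref{c12}) and via (\ref{c3}) are not obviously sign-consistent, so your instinct that the bookkeeping here is the crux is exactly right --- be prepared to cross-check $\mu$ by two independent routes before committing to the final scalar identity.
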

\begin{proof}
Replacing $\Omega_{2}$ by $\zeta$ in (\ref{c1}) and using (\ref{8}), we get $a+b=-2n$. Then contracting (\ref{c1}) gives $r=(2n+1)a+b$. In view of this (\ref{c1}) becomes
\begin{eqnarray}\label{c2}
S(\Omega_{1},\Omega_{2})=(\frac{r}{2n}+1)g(\Omega_{1},\Omega_{2})-(\frac{r}{2n}+2n+1)\eta(\Omega_{1})\eta(\Omega_{2}),
\end{eqnarray}
for any vector field $\Omega_{1},\Omega_{2}$ on $M$. Making use of (\ref{c2}) in (\ref{b1}) yields
\begin{eqnarray}\label{c3}
(\mathcal{L}_Vg)(\Omega_{2},\Omega_{3})=\{2\lambda-\beta r-2\alpha(\frac{r}{2n}+1)\}g(\Omega_{2},\Omega_{3})+2\alpha\{(2n+1)+\frac{r}{2n}\}\eta(\Omega_{2})\eta(\Omega_{3}).
\end{eqnarray}
Taking the covariant derivative of (\ref{c3}) along arbitrary vector field $\Omega_{1}$ we obtain
\begin{eqnarray}\label{c4}
(\nabla_{\Omega_{1}}\mathcal{L}_Vg)(\Omega_{2},\Omega_{3})=-(\frac{\alpha}{n}+\beta)(\Omega_{1}r)g(\Omega_{2},\Omega_{3})+\frac{\alpha}{n}(\Omega_{1}r)\eta(\Omega_{2})\eta(\Omega_{3})\nonumber\\+2\alpha(\frac{r}{2n}+2n+1)\{g(\Omega_{1},\Omega_{2})\eta(\Omega_{3})+g(\Omega_{1},\Omega_{3})\eta(\Omega_{2})-2\eta(\Omega_{1})\eta(\Omega_{2})\eta(\Omega_{3})\}.
\end{eqnarray}
Making use of (\ref{c4}) in (\ref{a19}) yields
\begin{eqnarray}\label{c5}
2(\mathcal{L}_V\nabla)(\Omega_{1},\Omega_{2})=-(\frac{\alpha}{n}+\beta)\{(\Omega_{1}r)\Omega_{2}+(\Omega_{2}r)\Omega_{1}-g(\Omega_{1},\Omega_{2})Dr\}\nonumber\\+\frac{\alpha}{n}\{(\Omega_{1}r)\eta(\Omega_{2})\zeta+(\Omega_{2}r)\eta(\Omega_{1})\zeta-\eta(\Omega_{1})\eta(\Omega_{2})Dr\}\nonumber\\+4\alpha(2n+1+\frac{r}{2n})\{g(\Omega_{1},\Omega_{2})\zeta-\eta(\Omega_{1})\eta(\Omega_{2})\zeta\}.
\end{eqnarray}
Setting $\Omega_{1}=\Omega_{2}=e_i$ where $e_i:i=1,2,..,2n+1$ is an orthonormal frame in (\ref{c5}) and summing over $i$, we get
\begin{eqnarray}\label{c6}
2\sum_{i=1}^{2n+1}\varepsilon_i(\mathcal{L}_V\nabla)(e_i,e_i)=-\{\beta(1-2n)+\frac{\alpha}{n}(1-n)\}Dr\nonumber\\+\frac{2\alpha}{n}(\zeta r)\zeta+8n\alpha(\frac{r}{2n}+2n+1)\zeta.
\end{eqnarray}
Now taking the covariant derivative of (\ref{c3}) give
\begin{eqnarray}\label{c7}
(\nabla_{\Omega_{1}}\mathcal{L}_Vg)(\Omega_{2},\Omega_{3})=-\beta(\Omega_{1}r)g(\Omega_{2},\Omega_{3})-2\alpha(\nabla_{\Omega_{1}}S)(\Omega_{2},\Omega_{3}),\nonumber
\end{eqnarray}
which on contracting yields $2\sum_{i=1}^{2n+1}\varepsilon_i(\mathcal{L}_V\nabla)(e_i,e_i)=\beta(2n-1)Dr$. In consequence of this in (\ref{c6}), we obtain
\begin{eqnarray}\label{c8}
2\alpha(n-1)Dr+2\alpha(\zeta r)\zeta+8n\alpha(\frac{r}{2n}+2n+1)\zeta=0.
\end{eqnarray}
Taking an inner product of (\ref{c8}) with $\zeta$, we get $\zeta r=-2(r+2n(2n+1))$ for $\alpha\neq 0$. In view of this, (\ref{c8}) yields $Dr=(\zeta r)\zeta$ for $n>1$. Then, replacing $\Omega_{2}$ by $\zeta$ in (\ref{c5}) results in
\begin{eqnarray}\label{c9}
2(\mathcal{L}_V\nabla)(\Omega_{1},\zeta)=-\beta(\Omega_{1}r)\zeta+(\frac{\alpha}{n}+\beta)(\zeta r)\varphi^2\Omega_{1}.
\end{eqnarray}
Taking the covariant derivate of (\ref{c9}), then inserting it in Yano's result \cite{Ya}: $$(\mathcal{L}_VR)(\Omega_{1},\Omega_{2})\Omega_{3}=(\nabla_{\Omega_{1}}\mathcal{L}_V\nabla)(\Omega_{2},\Omega_{3})-(\nabla_{\Omega_{2}}\mathcal{L}_V\nabla)(\Omega_{1},\Omega_{3})$$, for $\Omega_{3}=\zeta$ yields
\begin{eqnarray}\label{c10}
2(\mathcal{L}_VR)(\Omega_{1},\Omega_{2})\zeta=-\beta\{\Omega_{1}(\Omega_{2}r)\zeta-\Omega_{2}(\Omega_{1}r)\zeta\}-\beta\{(\Omega_{1}r)\varphi^2\Omega_{2}-(\Omega_{2}r)\varphi^2\Omega_{1}\}\nonumber\\+(\frac{\alpha}{n}+\beta)\{\Omega_{1}(\zeta r)\varphi^2\Omega_{2}-\Omega_{2}(\zeta r)\varphi^2\Omega_{1}+2(\zeta r)\{\eta(\Omega_{2})\Omega_{1}-\eta(\Omega_{1})\Omega_{2}\}\}.
\end{eqnarray}
Contracting (\ref{c10}) over $\Omega_{1}$ gives $(\mathcal{L}_VS)(\Omega_{2},\zeta)=-n\beta(\zeta r)\eta(\Omega_{2})$. In consequence of this in the Lie-derivative of (\ref{8}), we obtain
\begin{eqnarray}\label{c11}
(\frac{r}{2n}+1)g(\Omega_{2},\mathcal{L}_V\zeta)-(\frac{r}{2n}+2n+1)\eta(\Omega_{2})\eta(\mathcal{L}_V\zeta)=\nonumber\\n\beta(\zeta r)\eta(\Omega_{2})-2n(2\lambda-\beta r+4\alpha n)\eta(\Omega_{2})-2ng(\Omega_{2},\mathcal{L}_V\zeta).
\end{eqnarray}
Replacing $\Omega_{2}$ by $\zeta$ in (\ref{c11}) then inserting back in (\ref{c11}) gives $\lambda=-2\alpha n-n\beta(2n+1)$. In view of this in (\ref{c3}) we get $\eta(\mathcal{L}_V\zeta)=-\frac{\beta}{2}(r+2n(2n+1))$. In consequence of this and $Dr=(\zeta r)\zeta$ in the Lie-derivative of $S(\Omega_{1},\zeta)=-2n\eta(\Omega_{1})$ we get
\begin{eqnarray}\label{c12}
(r+2n(2n+1))\{2\mathcal{L}_V\zeta-\frac{\beta}{2}(\zeta r)\zeta\}=0.
\end{eqnarray}
Thus we get either $r=-2n(2n+1)$ in this case $M$ is Einstein or $\mathcal{L}_V\zeta=\frac{\beta}{4}(\zeta r)\zeta$. Suppose $r\neq-2n(2n+1)$ in some open set $\mathcal{O}$ on $M$. Then using (\ref{6}) and (\ref{c12}) implies
\begin{eqnarray}\label{c13}
\nabla_\zeta V=V-\eta(V)\zeta-\frac{\beta}{4}(\zeta r)\zeta.
\end{eqnarray}
Taking $\Omega_{2}=\zeta$ in the commutative formula $(\mathcal{L}_V\nabla)(\Omega_{1},\Omega_{2})=\nabla_{\Omega_{1}}\nabla_{\Omega_{2}}V-\nabla_{\nabla_{\Omega_{1}}\Omega_{2}}V+R(V,\Omega_{1})\Omega_{2}$ and using (\ref{c13}), (\ref{c12}) and (\ref{c9}), we obtain
\begin{eqnarray}
(2\alpha+n\beta)(\zeta r)\varphi^2\Omega_{1}=0,\nonumber
\end{eqnarray}
for any vector field $\Omega_{1}$ on $\mathcal{O}$. This shows that $\zeta r=0$, that is, $r=-2n(2n+1)$, a contradiction. This completes the proof.
\end{proof}
In particular, if we take scalar $\alpha=1$ and $\beta=0$ then in regard of Theorem \ref{t3}, we can state the following:
\begin{corollary}\cite{G2}
If the metric of an $\eta$-Einstein Kenmotsu manifold $M^{2n+1}(\varphi,\zeta,\eta,g), n>1$ is a Ricci soliton then it is Einstein and the soliton is expanding.
\end{corollary}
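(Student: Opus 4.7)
The plan is to obtain the corollary as a direct specialisation of Theorem \ref{t3}. A Ricci soliton is precisely the Ricci--Yamabe soliton of type $(1,0)$, so setting $\alpha=1$ and $\beta=0$ in equation (\ref{b1}) reproduces the defining equation $\mathcal{L}_V g+2S=2\lambda g$ of a Ricci soliton, with the constant $\lambda$ playing the role of the soliton constant. I need to verify that Theorem \ref{t3} applies under this specialisation and then read off the two assertions.

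First I would check the hypotheses of Theorem \ref{t3}. The assumption $\alpha\neq 0$ holds, since $\alpha=1$. The auxiliary hypothesis $2\alpha+n\beta\neq 0$ reduces to $2\neq 0$, which is automatic for every $n>1$. The $\eta$-Einstein and Kenmotsu structure hypotheses are carried over verbatim from the corollary. Hence Theorem \ref{t3} delivers that the scalar curvature is the constant $r=-2n(2n+1)$ and that $M$ is Einstein; inserting this into the $\eta$-Einstein decomposition (\ref{c2}) gives $Q=-2n\,\mathrm{Id}$, so $S(\Omega_1,\Omega_2)=-2n\,g(\Omega_1,\Omega_2)$. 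This establishes the first conclusion.

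For the sign of $\lambda$, I would simply invoke the identity $\lambda=-2\alpha n-n\beta(2n+1)$ that was derived inside the proof of Theorem \ref{t3} (from the argument that, once $r$ is constant and the $\eta$-Einstein reduction is carried out, contracting the Lie-derivative of $S(\Omega_1,\zeta)=-2n\eta(\Omega_1)$ forces this value of $\lambda$). With $\alpha=1$ and $\beta=0$ this collapses to $\lambda=-2n$. Since $n\geq 2>0$, we have $\lambda<0$, and by the convention recalled after equation (\ref{b2}) the soliton is expanding. No step here is hard; the entire argument is bookkeeping on top of Theorem \ref{t3}, and the only subtlety is making the specialisation of the parameters explicit and pointing out that the value of $\lambda$ is already extracted inside the proof of the parent theorem, not merely visible from its statement.
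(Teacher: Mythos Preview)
Your proposal is correct and follows essentially the same route as the paper: the corollary is stated in the paper immediately after Theorem~\ref{t3} as the specialisation $\alpha=1$, $\beta=0$, and the value $\lambda=-2\alpha n-n\beta(2n+1)=-2n$ (hence expanding) is indeed obtained inside the proof of Theorem~\ref{t3}, exactly as you indicate.
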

For the case $\alpha=1$ and $\beta=-2\rho$, we can state the following:
\begin{corollary}
If the metric of an $\eta$-Einstein Kenmotsu manifold $M^{2n+1}(\varphi,\zeta,\eta,g)(n>1)$ admits $\rho$-Einstein soliton then it is Einstein with constant scalar curvature $r=-2n(2n+1)$, provided $\rho\neq\frac{1}{n}$.
\end{corollary}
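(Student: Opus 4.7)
The plan is to obtain this corollary as an immediate specialization of Theorem \ref{t3} by substituting the pair $(\alpha,\beta)=(1,-2\rho)$ into the Ricci--Yamabe soliton equation (\ref{b1}). Indeed, as noted in the introduction, a $\rho$-Einstein soliton is precisely a $(1,-2\rho)$-type RYS, so any result about RYSs on $\eta$-Einstein Kenmotsu manifolds translates directly once the hypotheses of Theorem \ref{t3} are checked for this choice of scalars.

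First I would verify that the two numerical hypotheses of Theorem \ref{t3} survive the substitution. The hypothesis $\alpha\neq 0$ becomes $1\neq 0$, which is automatic. The hypothesis $2\alpha+n\beta\neq 0$ becomes $2-2n\rho\neq 0$, which is exactly the stated restriction $\rho\neq \frac{1}{n}$. Since the dimension restriction $n>1$ and the $\eta$-Einstein hypothesis are preserved verbatim, every assumption of Theorem \ref{t3} is in force.

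Applying Theorem \ref{t3} then yields that $M$ is Einstein and that the scalar curvature is the constant $r=-2n(2n+1)$, which is the conclusion of the corollary. No extra computation is required beyond the substitution; in particular, the soliton constant produced by Theorem \ref{t3}, namely $\lambda=-2\alpha n-n\beta(2n+1)$, reduces in this case to $\lambda=-2n+2n\rho(2n+1)$, consistent with the fact that the underlying soliton structure persists.

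The only mild subtlety is purely notational: one must be careful that the sign convention $\beta=-2\rho$ (rather than $\beta=2\rho$) used in the introduction to identify $\rho$-Einstein solitons among RYSs matches what is plugged into the non-degeneracy condition $2\alpha+n\beta\neq 0$. This is where the exclusion $\rho=\frac{1}{n}$ (as opposed to $\rho=-\frac{1}{n}$) originates, and it is the single place where a sign mistake could propagate. Beyond that book-keeping, the proof is a one-line invocation of Theorem \ref{t3}.
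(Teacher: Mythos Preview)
Your proposal is correct and follows exactly the paper's approach: the corollary is stated immediately after Theorem \ref{t3} as the specialization $\alpha=1$, $\beta=-2\rho$, and the condition $2\alpha+n\beta\neq0$ becomes precisely $\rho\neq\frac{1}{n}$.
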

It is known that the warped product $\mathbb{R}\times_{ce^t}V(k)$ where $V(k)$ is a K$\ddot{a}$hler manifold of constant holomorphic sectional curvature of dimension $2n$ admits Kenmotsu structure (see \cite{18}). Moreover, its sectional tensor is given by \cite{O}
\begin{eqnarray}
R(\Omega_{1},\Omega_{2})\Omega_{3}=H\{g(\Omega_{2},\Omega_{3})\Omega_{1}-g(\Omega_{1},\Omega_{3})\Omega_{2}\}+(H+1)\{g(\Omega_{1},\Omega_{3})\eta(\Omega_{2})\zeta\nonumber\\-g(\Omega_{2},\Omega_{3})\eta(\Omega_{1})\zeta+\eta(\Omega_{1})\eta(\Omega_{3})\Omega_{2}-\eta(\Omega_{2})\eta(\Omega_{3})\Omega_{1}+g(\Omega_{1},\varphi \Omega_{3})\varphi \Omega_{2}\nonumber\\-g(\Omega_{2},\varphi \Omega_{3})\varphi \Omega_{1}+2g(\Omega_{1},\varphi \Omega_{2})\varphi \Omega_{3}\}.\nonumber
\end{eqnarray}
Contracting the above equation we see that $$S(\Omega_{1},\Omega_{2})=2\{(n-1)H-1\}g(\Omega_{1},\Omega_{2})-2(n-1)(H+1)\eta(\Omega_{1})\eta(\Omega_{2}),$$ that is, it is $\eta$-Einstein. Now making use of Theorem (\ref{t3}) we get $H=-1$. Hence, we can state the following:
\begin{corollary}
If the metric of the warped product $\mathbb{R}\times_{ce^t}V(k)$, $(n>1)$ is a RYS with $\alpha\neq0$ then it is of constant curvature -1, provided $2\alpha+n\beta\neq 0$.
\end{corollary}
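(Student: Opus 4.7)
The plan is to treat this as a direct corollary of Theorem \ref{t3} combined with the explicit $\eta$-Einstein expression for the Ricci tensor of the warped product $\mathbb{R}\times_{ce^t}V(k)$. First I would record that the paragraph preceding the corollary already supplies the Ricci tensor
\[
S(\Omega_{1},\Omega_{2})=2\{(n-1)H-1\}g(\Omega_{1},\Omega_{2})-2(n-1)(H+1)\eta(\Omega_{1})\eta(\Omega_{2}),
\]
so the manifold is $\eta$-Einstein. Since the Kenmotsu structure on this warped product is well-known and the hypotheses $\alpha\neq 0$ and $2\alpha+n\beta\neq 0$ are those of Theorem \ref{t3}, I can invoke that theorem to conclude that the metric is Einstein with constant scalar curvature $r=-2n(2n+1)$.

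Next I would translate the Einstein condition back into a constraint on $H$. The Einstein condition forces the coefficient of $\eta(\Omega_{1})\eta(\Omega_{2})$ in the $\eta$-Einstein expression to vanish, i.e.\ $-2(n-1)(H+1)=0$. Because $n>1$, this gives $H=-1$. (As a consistency check, substituting $H=-1$ in the scalar coefficient yields $2\{(n-1)(-1)-1\}=-2n$, and then $r=(2n+1)(-2n)=-2n(2n+1)$, matching Theorem \ref{t3}.)

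Finally I would plug $H=-1$ into the displayed curvature tensor of the warped product. Every term carrying the factor $H+1$ drops out, leaving
\[
R(\Omega_{1},\Omega_{2})\Omega_{3}=-\{g(\Omega_{2},\Omega_{3})\Omega_{1}-g(\Omega_{1},\Omega_{3})\Omega_{2}\},
\]
which is precisely the curvature tensor of a space of constant sectional curvature $-1$. This finishes the argument.

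There is essentially no obstacle here beyond citing Theorem \ref{t3} correctly: the only substantive step is the algebraic extraction of $H=-1$ from the Einstein condition, which hinges on $n>1$ (needed both to kill the $(H+1)$ factor meaningfully and to match the hypothesis of Theorem \ref{t3}). I would explicitly flag the role of $n>1$ and of $2\alpha+n\beta\neq 0$ in the writeup to make clear why no further restriction on $H$ is needed.
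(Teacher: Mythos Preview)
Your proposal is correct and follows essentially the same route as the paper: the paper notes the warped product is $\eta$-Einstein via the displayed Ricci tensor, applies Theorem~\ref{t3}, and concludes $H=-1$. You supply more detail than the paper does in extracting $H=-1$ from the Einstein condition and in substituting back into the curvature tensor, but the argument is the same.
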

\section{Non-Normal almost Kenmotsu manifolds}
In this section, we give some examples of AKMs admitting ARYS and also investigated it in $(\kappa,\mu)'$-AKM. An AKM $M^{2n+1}(\varphi,\zeta,\eta,g)$ is said to be a $(\kappa,\mu)'$-AKM if $\zeta$ belongs to the $(\kappa,\mu)$-nullity distribution, i.e.,
\begin{eqnarray}\label{28}
R(\Omega_{1},\Omega_{2})\zeta=\kappa[\eta(\Omega_{2})\Omega_{1}-\eta(\Omega_{1})\Omega_{2}]+\mu[\eta(\Omega_{2})h'\Omega_{1}-\eta(\Omega_{1})h'\Omega_{2}],
\end{eqnarray}
for all vector fields $\Omega_{1},\Omega_{2}$ on $M$, where $\kappa,\mu$ are constants. Moreover if both $\kappa$ and $\mu$ are smooth functions in (\ref{28}), then $M$ is called a generalized $(\kappa,\mu)'$-AKM (see \cite{20,25,26}). On generalized $(\kappa,\mu)$ or $(\kappa,\mu)'$-AKM with $h\neq0$ (equivalently, $h'\neq0$), the following relations hold \cite{20}:
\begin{eqnarray}\label{30}
h'^2=(\kappa+1)\varphi^2,~~~~~h^2=(\kappa+1)\varphi^2,
\end{eqnarray}
\begin{eqnarray}\label{31}
Q\zeta=2n\kappa\zeta.
\end{eqnarray}
It follows from (\ref{30}) that $\kappa\leq -1$ and $\nu=\pm\sqrt{-\kappa-1}$, where $\nu$ is an eigenvalue corresponding to eigenvector $\Omega_{1}\in\mathcal{D}~~ (\mathcal{D}=Ker(\eta))$ of $h'$. The equality holds if and only if $h=0$ (equivalently, $h'=0$). Thus $h'\neq0$ if and only if $\kappa<-1$. First, we give some examples of AKM admitting ARYS.
\begin{example}
Let $(N,J,\bar{g})$ be a strictly almost K$\ddot{a}$hler Einstein manifold. We take $\eta=dt$, $\zeta=\frac{\partial}{\partial t}$ and (1,1)-tensor $\varphi$ by $\varphi \Omega_{1}=J\Omega_{1}$ for vector field $\Omega_{1}$ on $N$ and $\varphi \Omega_{1}=0$ if $\Omega_{1}$ is tangent to $\mathbb{R}$. Then it is known that $(M,g)=(\mathbb{R}\times_{ce^t}N, g_0+ce^{2t}\bar{g})$ together with the structure $(\varphi,\zeta,\eta,g)$ is an AKM (see \cite{18}). Also since $N$ is Einstein, we see $S^M=-2ng$. We define a smooth function $f(x,t)=t^2$ then it is easy to see that $(M,g,f,\lambda)$ is an ARYS for $\lambda(x,t)=-2n\alpha-n\beta(2n+1)+2$.
\end{example}
We can also construct an example of ARYS in AKM constructed by Barbosa-Ribeiro \cite{5}. 
\begin{example}
On the warped product $M=\mathbb{R}\times_{\sigma(t)}\mathbb{H}^{2n}$ consider the metric $g=dt^2+\sigma^2(t)g_0$, where $g_0$ is the standard metric on the hyperbolic space $\mathbb{H}^{2n}$. Then by Algere et al. \cite{A} result, it is easy to see that it is AKM. Let $\sigma(t)=cosht$ and $f(x,t)=sinht$ then $(M,g,f,\lambda)$ is an ARYS with $\lambda=sinht-n\beta(2n+1)-2n\alpha$.
\end{example}
Next, we restate some of the results obtained by Wang and Liu \cite{26} which will be used later in the prove of main theorems.
\begin{lemma}\cite{26}\label{l2}
Let $M^{2n+1}(\varphi,\zeta,\eta,g)$ be a generalized $(\kappa,\mu)'$-AKM with $h'\neq0$. For $n>1$, the Ricci operator $Q$ of $M$ can be expressed as 
\begin{eqnarray}
Q\Omega_{1}=-2n\Omega_{1}+2n(\kappa+1)\eta(\Omega_{1})\zeta-[\mu-2(n-1)h']\Omega_{1},\nonumber
\end{eqnarray}
for any vector field $\Omega_{1}$ on $M$. Further, if $\kappa$ and $\mu$ are constants and $n\geq1$, then $\mu=-2$ and hence
\begin{eqnarray}
Q\Omega_{1}=-2n\Omega_{1}+2n(\kappa+1)\eta(\Omega_{1})\zeta-2nh'\Omega_{1},
\end{eqnarray}
for any vector field $\Omega_{1}$ on $M$. In both cases, the scalar curvature of $M$ is $2n(\kappa-2n)$.
\end{lemma}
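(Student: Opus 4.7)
The plan is to derive an explicit expression for $\nabla h'$ from the Ricci identity applied to the Reeb vector field, and then to obtain $Q$ by contracting the full curvature tensor over a $\varphi$-invariant orthonormal frame built from the eigenvectors of $h'$.

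First I would recast the AKM connection formula (\ref{9}) in a form better suited to the $(\kappa,\mu)'$-setting. Using $h\varphi=-\varphi h$ together with $h'=h\varphi$ one gets $\varphi h=-h'$, so (\ref{9}) becomes $\nabla_{\Omega_{1}}\zeta=\Omega_{1}-\eta(\Omega_{1})\zeta+h'\Omega_{1}$. Substituting this into the Ricci identity $R(\Omega_{1},\Omega_{2})\zeta=\nabla_{\Omega_{1}}\nabla_{\Omega_{2}}\zeta-\nabla_{\Omega_{2}}\nabla_{\Omega_{1}}\zeta-\nabla_{[\Omega_{1},\Omega_{2}]}\zeta$ and comparing the result with the nullity condition (\ref{28}) would isolate the antisymmetric combination $(\nabla_{\Omega_{1}}h')\Omega_{2}-(\nabla_{\Omega_{2}}h')\Omega_{1}$ in terms of $\varphi$, $h'$, $\eta$, $\kappa$, and $\mu$. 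Differentiating the algebraic identity $h'^{2}=(\kappa+1)\varphi^{2}$ provides a complementary symmetric relation, and together these pin down $\nabla_{\Omega_{1}}h'$ (up to a term involving $d\kappa$ in the generalized case), in particular producing a closed formula for $(\nabla_{\zeta}h')\Omega_{1}$.

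Next I would compute $Q$ by tracing the curvature over an adapted frame. Since $h'\varphi=-\varphi h'$ and $h'^{2}=(\kappa+1)\varphi^{2}$ with $\kappa<-1$, the restriction of $h'$ to $\mathcal{D}=\ker\eta$ diagonalises with eigenvalues $\pm\nu$, $\nu=\sqrt{-\kappa-1}$, each eigenspace being $n$-dimensional and interchanged by $\varphi$. Choosing an orthonormal frame $\{e_{1},\ldots,e_{n},\varphi e_{1},\ldots,\varphi e_{n},\zeta\}$ with $h'e_{i}=\nu e_{i}$ and $h'\varphi e_{i}=-\nu\varphi e_{i}$, I would evaluate $Q\Omega_{1}=\sum_{i=1}^{n}R(\Omega_{1},e_{i})e_{i}+\sum_{i=1}^{n}R(\Omega_{1},\varphi e_{i})\varphi e_{i}+R(\Omega_{1},\zeta)\zeta$. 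The last term is handled immediately by (\ref{28}) and (\ref{31}); the first two sums follow from the $\nabla h'$ formula obtained in step one, and after collecting terms (the $-[\mu-2(n-1)h']$ piece arises precisely from the $2(n-1)$ cross-terms produced by the $n-1$ partners of each eigenvector under $\varphi$), the total telescopes to $-2n\Omega_{1}+2n(\kappa+1)\eta(\Omega_{1})\zeta-[\mu-2(n-1)h']\Omega_{1}$, which is the first assertion. The restriction $n>1$ here is essential because for $n=1$ the cross-term calculation degenerates.

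The hard part is the constant-coefficient case $n\geq 1$, where one must additionally show $\mu=-2$. For this I would apply the second Bianchi identity to (\ref{28}), contract one index, and substitute the formula for $\nabla h'$ derived in step one; the resulting integrability condition couples $\mu$, $\kappa$, and $h'^{2}=(\kappa+1)\varphi^{2}$, and the hypothesis $h'\neq 0$ (equivalently $\kappa+1\neq 0$) forces $\mu=-2$. Inserting $\mu=-2$ into the formula from step two collapses $-[\mu-2(n-1)h']$ into $-2nh'$, and the scalar curvature $r=2n(\kappa-2n)$ then drops out by taking the trace of $Q$ and using $\mathrm{Tr}(h')=0$.
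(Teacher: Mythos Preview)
The paper does not supply its own proof of this lemma: it is quoted verbatim from Wang--Liu \cite{26} and used as a black box in the proof of Theorem~\ref{t4}. So there is no in-paper argument to compare your proposal against.

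That said, your outline is essentially the standard route by which this result is established in the literature (Dileo--Pastore \cite{20}, Pastore--Saltarelli \cite{25}, Wang--Liu \cite{26}). Rewriting (\ref{9}) as $\nabla_{\Omega_{1}}\zeta=\Omega_{1}-\eta(\Omega_{1})\zeta+h'\Omega_{1}$, feeding this into the curvature identity for $R(\cdot,\cdot)\zeta$, and matching against (\ref{28}) does yield the antisymmetric part of $\nabla h'$; combined with the derivative of $h'^{2}=(\kappa+1)\varphi^{2}$ this gives enough control to trace over an eigenframe of $h'$ and obtain the displayed Ricci operator. One small caveat: in the standard treatments the constraint $\mu=-2$ in the constant-coefficient case is usually extracted not from the full second Bianchi identity but more directly, by computing $\nabla_{\zeta}h'$ (or equivalently $\ell=R(\cdot,\zeta)\zeta$) in two independent ways---once from the differentiated nullity condition and once from the structural relation $h'^{2}+h'=-\ell-\varphi^{2}$ that holds on any AKM---and comparing. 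Your Bianchi approach should also work, but as written it is the vaguest step in the proposal; if you pursue it, be explicit about which contraction you take and how $h'\neq 0$ is used to cancel the $(\kappa+1)$ factor.
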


Here, we consider gradient ARYS in the context of  $(\kappa,\mu)'$-AKM and generalized Theorem 3.5 \cite{10} and Theorem 3.1 \cite{W}. We state and prove the following:
\begin{theorem}\label{t4}
If $M^{2n+1}(\varphi,\zeta,\eta,g)$ be a $(\kappa,\mu)'$-AKM with $h'\neq0$ admitting gradient ARYS then either $M$ is locally isometric to $\mathbb{H}^{2n+1}(-4)\times\mathbb{R}^n$ or potential vector field is pointwise collinear with the Reeb vector field.
\end{theorem}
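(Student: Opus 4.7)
The plan is to start from the gradient ARYS equation \eqref{b2} written in operator form, namely
\[
\nabla_{\Omega_1} Df = \sigma\,\Omega_1 - \alpha\,Q\Omega_1,\qquad \sigma=\lambda-\tfrac{\beta r}{2},
\]
and then exploit the very explicit description of the Ricci operator that Lemma \ref{l2} gives on a $(\kappa,\mu)'$-AKM with constant $\kappa,\mu$: there one has $\mu=-2$, $r=2n(\kappa-2n)$ is constant (so $\sigma$ varies only through $\lambda$), and
\[
Q\Omega_1=-2n\Omega_1+2n(\kappa+1)\eta(\Omega_1)\zeta-2nh'\Omega_1.
\]
Substituting this gives a clean formula $\nabla_{\Omega_1} Df=(\sigma+2n\alpha)\Omega_1-2n\alpha(\kappa+1)\eta(\Omega_1)\zeta+2n\alpha\,h'\Omega_1$, which is the workhorse for everything that follows.

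Next I would feed this expression into the standard curvature identity
\[
R(\Omega_1,\Omega_2)Df=\nabla_{\Omega_1}\nabla_{\Omega_2}Df-\nabla_{\Omega_2}\nabla_{\Omega_1}Df-\nabla_{[\Omega_1,\Omega_2]}Df,
\]
using \eqref{9} for $\nabla\zeta$ and a known formula for $(\nabla_{\Omega_1}h')\Omega_2$ on $(\kappa,\mu)'$-manifolds (which can be read off from the Ricci identity applied to $h'$ together with \eqref{30}). In parallel, use the skew-symmetry $g(R(\Omega_1,\Omega_2)Df,\zeta)=-g(R(\Omega_1,\Omega_2)\zeta,Df)$ and the defining identity \eqref{28} to obtain the independent expression
\[
g(R(\Omega_1,\Omega_2)Df,\zeta)=-\kappa[\eta(\Omega_2)(\Omega_1 f)-\eta(\Omega_1)(\Omega_2 f)]+2[\eta(\Omega_2)g(h'\Omega_1,Df)-\eta(\Omega_1)g(h'\Omega_2,Df)].
\]
Equating the two formulas for $R(\Omega_1,\Omega_2)Df$, and then specializing to $\Omega_2=\zeta$ (and separately replacing $\Omega_1$ by $h'\Omega_1$ or $\varphi\Omega_1$) should reduce everything to algebraic identities in the eigenspaces of $h'$, where $h'$ acts by $\pm\sqrt{-\kappa-1}$ on $\mathcal{D}$.

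From that system I expect a clean dichotomy to emerge. Writing $Df=\zeta(f)\zeta+U$ with $U\in\mathcal{D}$ and contracting against the $\pm\sqrt{-\kappa-1}$ eigenvectors should give an equation of the schematic form $\bigl[(\kappa+2)+\text{something}\cdot h'\bigr]U=0$. Either the scalar factor is nonzero, which forces $U=0$, i.e.\ $Df$ is pointwise collinear with $\zeta$, giving the second alternative; or the factor vanishes, which pins down $\kappa=-2$. With $\kappa=-2$, the eigenvalues of $h'$ become $\pm1$, and we are in the situation where $M$ splits locally as the product $\mathbb{H}^{n+1}(-4)\times\mathbb{R}^n$ by the classification of $(\kappa,\mu)'$-AKMs with $h'\neq 0$ (see \cite{20}).

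The main obstacle I anticipate is the bookkeeping for $\nabla h'$ and the curvature computation, since the $(\kappa,\mu)'$-structure couples the $\varphi$-action, the $h'$-action, and the Reeb direction in a way that makes many of the intermediate terms look similar but not equal; the trick will be to split $\Omega_1,\Omega_2$ into $\zeta$-components and eigenvectors of $h'$ in $\mathcal{D}$ and track how $\varphi$ interchanges the $\pm\sqrt{-\kappa-1}$-eigenspaces (which follows from $h\varphi=-\varphi h$). Once that algebraic bookkeeping is under control, the dichotomy $\kappa=-2$ versus $Df\parallel\zeta$ should fall out, and invoking the classification of $(\kappa,\mu)'$-AKMs with $h'\neq 0$ finishes the proof.
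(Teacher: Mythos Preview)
Your outline is correct and leads to the same dichotomy $\kappa=-2$ versus $Df\parallel\zeta$, but your route is more computational than the paper's. The paper never computes $(\nabla_{\Omega_1}h')\Omega_2$ or splits into $h'$-eigenspaces. Instead it works with the abstract identity
\[
R(\Omega_1,\Omega_2)Df=(\Omega_1\sigma)\Omega_2-(\Omega_2\sigma)\Omega_1-\alpha\bigl[(\nabla_{\Omega_1}Q)\Omega_2-(\nabla_{\Omega_2}Q)\Omega_1\bigr]
\]
and extracts just two scalar consequences: (i) pairing with $\zeta$ and comparing with \eqref{28} gives $D\lambda-(\zeta\lambda)\zeta=\kappa\{(\zeta f)\zeta-Df\}-\mu h'Df$; (ii) \emph{tracing} over $\Omega_1$ gives $QDf=-2nD\lambda$ (here $r$ is constant), and feeding Lemma~\ref{l2} into this yields $D\lambda-Df+(\kappa+1)(\zeta f)\zeta=h'Df$. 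Combining (i) and (ii), applying $\varphi$, and then applying $h'$ once more produces $(\kappa+2)\varphi Df=0$ in three lines. Your plan would reach the same endpoint, but the contraction step in (ii) is the shortcut you are missing: it replaces the full curvature bookkeeping and the $\nabla h'$ formula by a single use of $\operatorname{div}S=\tfrac12 dr=0$, so you never have to track how $\varphi$ swaps the $\pm\sqrt{-\kappa-1}$-eigenspaces. For the $\kappa=-2$ branch, the paper invokes the explicit sectional-curvature formulas of Pastore--Saltarelli \cite{25} (Theorem~5.1, Propositions~4.1 and~4.3) rather than the general classification in \cite{20}, but this is only a citation difference.
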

\begin{proof}
Suppose that $(\kappa,\mu)'$-AKM admits gradient ARYS then Eq. (\ref{a1})-(\ref{a3}) is valid. Taking an inner product of (\ref{a3}) with $\zeta$ and inserting Lemma \ref{l2}, we obtain
\begin{eqnarray}\label{d1}
g(R(\Omega_{1},\Omega_{2})Df,\zeta)=(\Omega_{1}\lambda)\eta(\Omega_{2})-(\Omega_{2}\lambda)\eta(\Omega_{1})-\alpha\{g(Qh'\Omega_{2},\Omega_{1})-g(Qh'\Omega_{1},\Omega_{2})\}.
\end{eqnarray}
Taking an inner product of (\ref{28}) with $Df$, then inserting it in (\ref{d1}) and replacing $\Omega_{1}$ by $\zeta$ gives
\begin{eqnarray}\label{d2}
-(\zeta\lambda)\zeta+D\lambda=\kappa\{(\zeta f)\zeta-Df\}-\mu h'Df.
\end{eqnarray}
Contracting (\ref{a3}) over $\Omega_{1}$, we get $QDf=-2nD\lambda$. In consequence of this in Lemma \ref{l2} gives
\begin{eqnarray}\label{d3}
D\lambda-Df+(\kappa+1)(\zeta f)\zeta=h'Df.
\end{eqnarray}
Combining (\ref{d2}) and (\ref{d3}), we obtain
\begin{eqnarray}\label{d4}
\kappa\{(\zeta f)\zeta-Df\}+D\lambda+(\zeta\lambda)\zeta-2Df+2(\kappa+1)(\zeta f)\zeta=0.
\end{eqnarray}
Operating the forgoing equation by $\varphi$ yields
\begin{eqnarray}\label{d5}
\varphi D\lambda-(\kappa+2)\varphi Df=0,\nonumber
\end{eqnarray}
implies, $$D\lambda-(\kappa+2)Df\in\mathbb{R}\zeta.$$ Therefore, we can write $D\lambda=(\kappa+2)Df+s\zeta$, where $s$ is a smooth function. In view of this in (\ref{d2}) infer
\begin{eqnarray}\label{d6}
2(\kappa+1)Df+(s-\zeta\lambda-\kappa\zeta f)\zeta=2h'Df.
\end{eqnarray}
Operating (\ref{d6}) by $h'$ then inserting the obtain expression in (\ref{d6}) gives
\begin{eqnarray}\label{d7}
(\kappa+2)\varphi Df=0.\nonumber
\end{eqnarray}
Thus we have either $\kappa=-2$ or $Df=(\zeta f)\zeta$. \\
Suppose $\kappa=-2$. Then without loss of generality, we may choose $\nu=1$. Then we have from Theorem 5.1 of \cite{25} we get
$$R({\Omega_{1}}_\nu,{\Omega_{2}}_\nu){\Omega_{3}}_\nu=-4[g({\Omega_{2}}_\nu,{\Omega_{3}}_\nu){\Omega_{1}}_\nu-g({\Omega_{1}}_\nu,{\Omega_{3}}_\nu){\Omega_{2}}_\nu],$$
$$R({\Omega_{1}}_{-\nu},{\Omega_{2}}_{-\nu}){\Omega_{3}}_{-\nu}=0,$$
for any ${\Omega_{1}}_\nu,{\Omega_{2}}_\nu,{\Omega_{3}}_\nu\in [\nu]'$ and ${\Omega_{1}}_{-\nu},{\Omega_{2}}_{-\nu},{\Omega_{3}}_{-\nu}\in[-\nu]'$. In consequence of this the Proposition 4.1 and Proposition 4.3 of \cite{25} along with $\nu=1$ shows that it is locally isometric to $\mathbb{H}^{2n+1}(-4)\times\mathbb{R}^n$. This completes the proof.
\end{proof}
Suppose $\kappa\neq-2$. Then in regard of Theorem \ref{t4} we have $V=Df=(\zeta f)\zeta$. Take $F=\zeta f$ and taking the covariant derivative of $V=F\zeta$ along arbitrary vector field $\Omega_{1}$ we get
\begin{eqnarray}
\nabla_{\Omega_{1}}V=(\Omega_{1}F)\zeta+F(-\varphi^2\Omega_{1}+h'\Omega_{1}).\nonumber
\end{eqnarray}
Making use of this in (\ref{b1}) yields
\begin{eqnarray}\label{d8}
(\Omega_{1}F)\eta(\Omega_{2})+(\Omega_{2}F)\eta(\Omega_{1})+2Fg(\Omega_{1},\Omega_{2})-2F\eta(\Omega_{1})\eta(\Omega_{2})\nonumber\\+2Fg(h'\Omega_{1},\Omega_{2})=(2\lambda-\beta r)g(\Omega_{1},\Omega_{2})-2\alpha S(\Omega_{1},\Omega_{2}).
\end{eqnarray}
Replacing $\Omega_{2}$ by $\zeta$ in (\ref{d8}) gives
\begin{eqnarray}\label{d9}
\Omega_{1}F=(2\lambda-\beta r-4n\alpha\kappa-\zeta F)\eta(\Omega_{1}),
\end{eqnarray}
for any vector field $\Omega_{1}$ on $M$. Contracting (\ref{d8}) then inserting it in (\ref{d9}) and replacing $\Omega_{1}$ by $\zeta$ in the obtained expression we obtain
\begin{eqnarray}\label{d10}
F=\lambda-\frac{\beta r}{2}+2n\alpha.
\end{eqnarray}
Inserting (\ref{d9}) in (\ref{d8}) and comparing it with Lemma \ref{l2} yields $(F-2n\alpha)(\kappa+1)\varphi^2\Omega_{1}=0$ for any $\Omega_{1}$ on $M$. As $\kappa<-1$, we see that $F=2n\alpha$, in view of this in (\ref{d10}) implies $\lambda=\frac{\beta r}{2}$ i.e., a constant. Therefore, $M$ reduces to gradient RYS. Hence using Corollary 3.7 of \cite{10} we can state the following:
\begin{corollary}
Let $M^{2n+1}(\varphi,\zeta,\eta,g)$ be a non-Kenmotsu $(\kappa,\mu)'$-AKM with $\kappa\neq-2$ admitting a gradient ARYS then\\
1. The potential vector field $V$ is a constant multiple of $\zeta$.\\
2. $V$ is a strict infinitisimal contact transformation.\\
3. $V$ leaves $h'$ invariant.
\end{corollary}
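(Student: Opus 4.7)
The strategy is to push the computation carried out in the paragraph immediately preceding the corollary to its end and then harvest the three conclusions. Since we are in the non-Kenmotsu case we have $h'\neq 0$, hence $\kappa<-1$; combined with the hypothesis $\kappa\neq-2$, Theorem \ref{t4} gives $V=Df=(\zeta f)\zeta$. Writing $F=\zeta f$ and using the AKM formula $\nabla_{\Omega_1}\zeta=-\varphi^2\Omega_1+h'\Omega_1$, I would compute $\nabla_{\Omega_1}V=(\Omega_1 F)\zeta+F(-\varphi^2\Omega_1+h'\Omega_1)$ and substitute into the defining equation (\ref{b1}) of the ARYS to produce (\ref{d8}). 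Specializing $\Omega_2=\zeta$ yields (\ref{d9}), and a full contraction of (\ref{d8}) combined with (\ref{d9}) delivers the scalar identity (\ref{d10}). Comparing (\ref{d8}) with the Ricci formula furnished by Lemma \ref{l2} then isolates the relation $(F-2n\alpha)(\kappa+1)\varphi^2\Omega_1=0$, and since $\kappa<-1$, this forces $F=2n\alpha$ identically. That is precisely conclusion (1): $V=2n\alpha\,\zeta$ is a constant multiple of the Reeb vector field.

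With $F=2n\alpha$ and (\ref{d10}) in hand we get $\lambda=\tfrac{\beta r}{2}$, and by Lemma \ref{l2} the scalar curvature $r=2n(\kappa-2n)$ is constant, so $\lambda$ is constant and the ARYS collapses to a gradient RYS. At this point Corollary 3.7 of \cite{10}, which is tailored to the case of a $(\kappa,\mu)'$-AKM with potential field collinear with $\zeta$, directly delivers the remaining assertions (2) and (3). Alternatively, I would check them by hand from $V=2n\alpha\,\zeta$: Cartan's formula gives $\mathcal{L}_\zeta\eta=\iota_\zeta d\eta+d(\iota_\zeta\eta)=0$ on any AKM (since $d\eta=0$ and $\eta(\zeta)=1$), hence $\mathcal{L}_V\eta=2n\alpha\,\mathcal{L}_\zeta\eta=0$, which is the defining property of a strict infinitesimal contact transformation; and $\mathcal{L}_Vh'=2n\alpha\,\mathcal{L}_\zeta h'=0$ follows from the $\zeta$-parallelism of $h'$ on a $(\kappa,\mu)'$-AKM, a standard consequence of the constancy of $\kappa,\mu$ and the eigenbundle decomposition of \cite{25}.

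The only step that requires genuine work is the identification $F=2n\alpha$. Once (\ref{d8}) is written out and the Ricci operator of Lemma \ref{l2} substituted, the delicate point is the careful separation of $g$, $\eta\otimes\eta$, and $h'$ contributions so that the coefficient of $\varphi^2\Omega_1$ can be cleanly isolated; the condition $\kappa<-1$ then rules out the vanishing of $\kappa+1$ and pins $F$ to the single value $2n\alpha$. From that point on, (1) is immediate, and (2) and (3) are either a direct calculation or a quotation of Corollary 3.7 of \cite{10}.
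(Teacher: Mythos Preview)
Your proposal is correct and follows essentially the same route as the paper: from Theorem~\ref{t4} and $\kappa\neq-2$ you obtain $V=(\zeta f)\zeta$, then derive (\ref{d8})--(\ref{d10}), compare with Lemma~\ref{l2} to force $F=2n\alpha$ (using $\kappa<-1$), conclude that $\lambda=\tfrac{\beta r}{2}$ is constant so the ARYS becomes a gradient RYS, and finish by invoking Corollary~3.7 of \cite{10}. The paper does exactly this; your additional direct verification of (2) and (3) via $\mathcal{L}_\zeta\eta=0$ and $\mathcal{L}_\zeta h'=0$ is a harmless supplement not present in the paper.
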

Consider a generalized $(\kappa,\mu)'$-AKM of dimension three with $\kappa<-1$. If we assume that $\kappa$ is invariant along $\zeta$, then from (Proposition 3.2, \cite{25}) we have $\zeta(\kappa)=-2(\kappa+1)(\mu+2)$ implies $\mu=-2$. Moreover, from (Lemma 3.3, \cite{Sa}), we have $h'(grad\mu)=grad\kappa-\zeta(\kappa)\zeta$ which implies $\kappa$ is constant under our assumption. Therefore $M^3$ becomes a $(\kappa,-2)'$-AKM. By applying Theorem \ref{t4}, we can conclude the following:
\begin{corollary}
Let $M^3(\varphi,\eta,\zeta,g)$ be a generalized non-Kenmotsu $(\kappa,\mu)'$-AKM with $\kappa<-1$ invariant along the Reeb vector field admitting gradient ARYS then it is either locally isometric to the product space $\mathbb{H}^{2n+1}(-4)\times\mathbb{R}^n$ or potential vector field is pointwise collinear with the Reeb vector field.
\end{corollary}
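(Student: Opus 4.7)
The plan is to show that the generalized hypothesis, together with the $\zeta$-invariance of $\kappa$, collapses to the constant-coefficient $(\kappa,\mu)'$-setting already handled by Theorem \ref{t4}.

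First I would unpack the assumption ``$\kappa$ invariant along the Reeb vector field'' as $\zeta(\kappa)=0$. Invoking Proposition 3.2 of \cite{25}, which on a generalized $(\kappa,\mu)'$-AKM gives the identity $\zeta(\kappa)=-2(\kappa+1)(\mu+2)$, and combining with the standing hypothesis $\kappa<-1$ (so $\kappa+1\neq 0$), I immediately deduce $\mu=-2$. Next, applying Lemma 3.3 of \cite{Sa} in the form $h'(\operatorname{grad}\mu)=\operatorname{grad}\kappa-\zeta(\kappa)\zeta$ and inserting $\mu=-2$ (now a constant) together with $\zeta(\kappa)=0$, I obtain $\operatorname{grad}\kappa=0$, so $\kappa$ is also constant.

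At this point $M^3$ has been promoted from a \emph{generalized} $(\kappa,\mu)'$-AKM to an ordinary $(\kappa,-2)'$-AKM with $h'\neq 0$ (the latter because $\kappa<-1$), so the hypotheses of Theorem \ref{t4} are satisfied verbatim. Applying Theorem \ref{t4} then yields the stated dichotomy: either $M^3$ is locally isometric to $\mathbb{H}^{n+1}(-4)\times\mathbb{R}^n$, or the potential vector field $V=Df$ is pointwise collinear with $\zeta$, completing the proof.

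The only subtlety I foresee is checking that Theorem \ref{t4} is genuinely applicable in the low-dimensional case $n=1$. A look at Lemma \ref{l2} shows that the Ricci-operator formula $Q\Omega_{1}=-2n\Omega_{1}+2n(\kappa+1)\eta(\Omega_{1})\zeta-2nh'\Omega_{1}$ used in the proof of Theorem \ref{t4} is valid for all $n\geq 1$ precisely when both $\kappa$ and $\mu$ are constants and $\mu=-2$; this is exactly the situation to which we have reduced, so the argument transfers without change. No additional computation is needed beyond invoking the two cited structure results and Theorem \ref{t4}.
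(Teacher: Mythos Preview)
Your proposal is correct and follows essentially the same route as the paper: use $\zeta(\kappa)=0$ together with Proposition~3.2 of \cite{25} to force $\mu=-2$, then Lemma~3.3 of \cite{Sa} to conclude $\kappa$ is constant, reducing to the constant-coefficient $(\kappa,-2)'$ setting where Theorem~\ref{t4} applies directly. Your explicit check that Lemma~\ref{l2} (and hence the proof of Theorem~\ref{t4}) remains valid for $n=1$ once $\kappa,\mu$ are constants with $\mu=-2$ is a welcome clarification that the paper leaves implicit.
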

\section{3-dimensional Almost Kenmotsu manifolds}
On a 3-dimensional AKM $M^3$, we define the following open subsets:
\begin{eqnarray}
\mathcal{U}_1=\{p\in M^3:h\neq 0~~ \mbox{in a neighbourhood of}~~ p\},\nonumber\\
\mathcal{U}_1=\{p\in M^3:h=0~~ \mbox{in a neighbourhood of}~~ p\}.\nonumber
\end{eqnarray}
 Then, $\mathcal{U}_1\cup\mathcal{U}_2$ is an open and dense subset of $M^3$ and there exists a local orthonormal basis $\{e,\varphi e,\zeta\}$ of three smooth unit eigenvectors of $h$ for any point $p\in\mathcal{U}_1\cup\mathcal{U}_2$. On $\mathcal{U}_1$ we may set $he_1=\vartheta e_1$ and $he_2=-\vartheta e_2$, where $\vartheta$ is a positive function.
\begin{lemma}\cite{28c}\label{l6}
On $\mathcal{U}_1$ we have
\begin{eqnarray}
&\nabla_\zeta\zeta=0,~~~~~~~~~~~~~~~~~~\nabla_\zeta e=a\varphi e,~~~~~~~~~~~~~~~~~~~~~~~~~\nabla_\zeta\varphi e=-ae,\nonumber\\
&\nabla_e\zeta=e-\vartheta\varphi e,~~~~~~~~~~~~\nabla_ee=-\zeta-b\varphi e,~~~~~~~~~~~\nabla_e\varphi e=\vartheta\zeta+be,\nonumber\\
&\nabla_{\varphi e}\zeta=-\vartheta e+\varphi e,~~~~~~~~\nabla_{\varphi e}e=\vartheta\zeta+c\varphi e,~~~~~~~~\nabla_{\varphi e}\varphi e=-\zeta-ce,\nonumber
\end{eqnarray}
where $a,b,c$ are smooth functions.
\end{lemma}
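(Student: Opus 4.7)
The plan is to prove the lemma in two stages: first compute the three covariant derivatives that involve $\zeta$ in the lower slot, since these are fully determined by the AKM structure; then use metric compatibility of the Levi-Civita connection together with orthonormality of the frame $\{e,\varphi e,\zeta\}$ to pin down the remaining six derivatives up to three scalar degrees of freedom, which we label $a,b,c$.

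For the first stage, recall formula (\ref{9}), namely $\nabla_{\Omega_1}\zeta=-\varphi^2\Omega_1-\varphi h\Omega_1$, and the identities $h\zeta=0$, $h\varphi=-\varphi h$, $\varphi\zeta=0$, $\varphi^2\Omega_1=-\Omega_1+\eta(\Omega_1)\zeta$. On $\mathcal{U}_1$ we may write $he=\vartheta e$ and $h(\varphi e)=-\varphi(he)=-\vartheta\varphi e$. Setting $\Omega_1=\zeta$ gives $\nabla_\zeta\zeta=0$; setting $\Omega_1=e$ gives $\nabla_e\zeta=-\varphi^2 e-\varphi(he)=e-\vartheta\varphi e$; and setting $\Omega_1=\varphi e$ gives $\nabla_{\varphi e}\zeta=-\varphi^2(\varphi e)-\varphi h(\varphi e)=\varphi e-\vartheta e$. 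These are exactly the three entries of the middle column of the table.

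For the second stage, metric compatibility gives $X\,g(Y,Z)=g(\nabla_XY,Z)+g(Y,\nabla_XZ)$ for all frame vectors, which applied to the constant functions $g(e,e)$, $g(\varphi e,\varphi e)$, $g(\zeta,\zeta)$, $g(e,\varphi e)$, $g(e,\zeta)$, $g(\varphi e,\zeta)$ yields six constraints saying each $\nabla_XY$ is antisymmetric in its two arguments (with respect to the frame-component matrix). The diagonal components therefore vanish, and the off-diagonal components are determined in matched pairs. For instance, expanding $g(\nabla_\zeta e,\zeta)=-g(e,\nabla_\zeta\zeta)=0$ and $g(\nabla_\zeta e,e)=\tfrac12\zeta\,g(e,e)=0$ forces $\nabla_\zeta e=a\varphi e$ for some smooth function $a$, and then $g(\nabla_\zeta \varphi e,e)=-g(\varphi e,\nabla_\zeta e)=-a$ together with the vanishing diagonal components yields $\nabla_\zeta\varphi e=-ae$. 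In the same way, knowing $\nabla_e\zeta=e-\vartheta\varphi e$ forces $g(\nabla_e e,\zeta)=-1$ and $g(\nabla_e\varphi e,\zeta)=\vartheta$, and the remaining single scalar freedom is absorbed into $b:=-g(\nabla_e e,\varphi e)=g(\nabla_e\varphi e,e)$, producing $\nabla_e e=-\zeta-b\varphi e$ and $\nabla_e\varphi e=\vartheta\zeta+be$. The last column is obtained identically starting from $\nabla_{\varphi e}\zeta=-\vartheta e+\varphi e$, with the free scalar $c:=g(\nabla_{\varphi e}e,\varphi e)$.

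There is no real obstacle beyond bookkeeping; the only point that deserves care is ensuring that $e$, $\varphi e$, and the scalars $a,b,c$ are genuinely smooth on $\mathcal{U}_1$. This is where the hypothesis $h\neq 0$ on $\mathcal{U}_1$ enters: since $\vartheta>0$ there, the eigenvalues $\pm\vartheta$ of $h$ restricted to the contact distribution are distinct and nonzero, so the corresponding eigenspaces form smooth line subbundles, from which a smooth local unit eigenvector $e$ and hence $\varphi e$ can be chosen; the scalars $a,b,c$ are then defined as specific smooth inner products of covariant derivatives against frame vectors, so their smoothness is automatic. With this frame in place, the nine relations in the statement are just the reorganization of the six metric-compatibility constraints together with the three explicit computations from (\ref{9}), completing the proof.
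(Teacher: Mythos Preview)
The paper does not supply its own proof of this lemma; it simply quotes the result from Cho \cite{28c}. Your argument is correct and is exactly the standard derivation: the three derivatives $\nabla_X\zeta$ are forced by formula (\ref{9}) together with $he=\vartheta e$, $h\varphi e=-\vartheta\varphi e$, and then metric compatibility in the orthonormal frame determines every other $\nabla_XY$ up to the three free off-diagonal scalars $a,b,c$. One cosmetic slip: the entries $\nabla_\zeta\zeta$, $\nabla_e\zeta$, $\nabla_{\varphi e}\zeta$ that you compute in stage one form the \emph{left} column of the displayed array, not the middle one, but this does not affect the mathematics.
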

From Lemma \ref{l6}, the poisson brackets for $\{e,\varphi e,\zeta\}$ are as follows:
\begin{eqnarray}\label{53}
[\zeta,e]=(a+\vartheta)\varphi e-e, [e,\varphi e]=be-c\varphi e, [\varphi e,\zeta]=(a-\vartheta)e+\varphi e.
\end{eqnarray}
In view of lemma \ref{l6}, the expressions for Ricci operator are as follows:
\begin{lemma}\label{l5}
On $\mathcal{U}_1$, we have
\begin{eqnarray}
\begin{cases}
Q\zeta=-2(\vartheta^2+1)\zeta-(\varphi e(\vartheta)+2\vartheta b)e-(e(\vartheta)+2\vartheta c)\varphi e,\\
Qe=-(\varphi e(\vartheta)+2\vartheta b)\zeta-(A+2\vartheta a)e+(\zeta(\vartheta)+2\vartheta)\varphi e,\\
Q\varphi e=-(e(\vartheta)+2\vartheta c)\zeta+(\zeta(\vartheta)+2\vartheta)e-(A-2\vartheta a)\varphi e,
\end{cases}
\end{eqnarray}
where $A=e(c)+\varphi e(b)+b^2+c^2+2$.
\end{lemma}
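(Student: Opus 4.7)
The strategy is to compute the Ricci operator directly from the Koszul data in Lemma~\ref{l6}. Because $\{e,\varphi e,\zeta\}$ is a local orthonormal frame on $\mathcal{U}_1$, the Ricci tensor is given by
\[
S(X,Y)=\sum_{i=1}^{3}g\bigl(R(E_i,X)Y,E_i\bigr),\qquad E_1=e,\ E_2=\varphi e,\ E_3=\zeta,
\]
and the Ricci operator is recovered from $g(QX,Y)=S(X,Y)$. So the task reduces to computing enough of the Riemann tensor on this frame, then contracting.

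First I would evaluate $R(X,Y)Z=\nabla_X\nabla_Y Z-\nabla_Y\nabla_X Z-\nabla_{[X,Y]}Z$ on the frame by plugging in the connection coefficients of Lemma~\ref{l6} together with the Lie brackets (\ref{53}). The six independent pieces I need are $R(e,\varphi e)\zeta$, $R(e,\zeta)\zeta$, $R(\varphi e,\zeta)\zeta$, $R(e,\varphi e)e$, $R(e,\zeta)e$, and $R(\varphi e,\zeta)\varphi e$; the remaining components follow by the symmetries of $R$. Each such expansion produces first derivatives of the structure functions $a,b,c,\vartheta$ along the frame directions together with products of these functions; for instance, $\nabla_e(\nabla_{\varphi e}\zeta)=\nabla_e(-\vartheta e+\varphi e)$ generates an $e(\vartheta)\,e$ term plus $\vartheta\nabla_e e$ and $\nabla_e\varphi e$, which are then replaced using Lemma~\ref{l6}.

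Next I contract. The diagonal component $S(\zeta,\zeta)=g(R(e,\zeta)\zeta,e)+g(R(\varphi e,\zeta)\zeta,\varphi e)$ should collapse to $-2(\vartheta^{2}+1)$, in line with the general AKM identity $S(\zeta,\zeta)=-2n-\mathrm{tr}(h^{2})$ for $n=1$ and $\mathrm{tr}(h^{2})=2\vartheta^{2}$. The off-diagonal mixed terms $S(\zeta,e)=-(\varphi e(\vartheta)+2\vartheta b)$ and $S(\zeta,\varphi e)=-(e(\vartheta)+2\vartheta c)$ come out of $g(R(\varphi e,\zeta)\zeta,e)$ and $g(R(e,\zeta)\zeta,\varphi e)$, with the $\vartheta b$ and $\vartheta c$ contributions arising from the $b\varphi e$ and $c\varphi e$ pieces of $\nabla_e e$ and $\nabla_{\varphi e}e$. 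For $S(e,e)$, $S(\varphi e,\varphi e)$ and $S(e,\varphi e)$ one must further include $g(R(\zeta,e)e,\zeta)$ type contributions and the $R(e,\varphi e)\cdot$ piece; the constants $e(c)$, $\varphi e(b)$, $b^{2}+c^{2}$ and the additive $+2$ in the quantity $A$ come from these, with the $+2$ traced to the $-\zeta$ terms in $\nabla_e e$ and $\nabla_{\varphi e}\varphi e$. Finally, assembling $Q\zeta=S(\zeta,e)e+S(\zeta,\varphi e)\varphi e+S(\zeta,\zeta)\zeta$, and similarly for $Qe$, $Q\varphi e$, gives the stated formulas.

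The main obstacle is purely bookkeeping: a large number of terms involving $a,b,c,\vartheta$ and their frame derivatives must be tracked, and many cancellations governed by the bracket relations (\ref{53}) have to occur before the compact form of Lemma~\ref{l5} emerges. A useful consistency check along the way is self-adjointness of $Q$: the coefficient of $e$ in $Q\zeta$ must equal the coefficient of $\zeta$ in $Qe$ (both equal $-(\varphi e(\vartheta)+2\vartheta b)$), and likewise for the $(\zeta,\varphi e)$ and $(e,\varphi e)$ pairs; verifying these equalities catches arithmetic slips without having to audit every individual curvature component.
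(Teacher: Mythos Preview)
Your approach is correct and is exactly what the paper intends: the lemma is stated in the paper with only the phrase ``In view of Lemma~\ref{l6}'' by way of proof, so the implicit argument is precisely the direct computation of the curvature tensor from the connection coefficients of Lemma~\ref{l6} and the brackets~(\ref{53}), followed by contraction to the Ricci operator on the orthonormal frame. Your outlined consistency check via the self-adjointness of $Q$ is a sensible safeguard for the bookkeeping.
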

Suppose that the non-trivial potential vector field of gradient ARY soliton $(\alpha\neq 0)$ is orthogonal to the Reeb vector field $\zeta$, then we can write $V=f_1e+f_2\varphi e$, where $f_1,f_2$ are smooth functions. Replacing $\Omega_{1}$ by $\zeta$ in (\ref{a1}) and making use of Lemmas \ref{l6} and \ref{l5}, we get
\begin{eqnarray}\label{54}
\begin{cases}
-2\alpha(\vartheta^2+1)=\lambda-\frac{\beta r}{2},\\
\varphi e(\vartheta)=-2b\vartheta,\\
e(\vartheta)=-2c\vartheta.
\end{cases}
\end{eqnarray}
Similarly, taking $\Omega_{1}=e$ in (\ref{a1}) gives
\begin{eqnarray}\label{55}
\begin{cases}
\vartheta f_1-f_2=0,\\
e(f_1)+bf_2-\alpha(A+2a\vartheta)=\lambda-\frac{\beta r}{2},\\
e(f_2)-bf_1+\alpha(\zeta(\vartheta)+2\vartheta)=0.
\end{cases}
\end{eqnarray}
Also for $\Omega_{1}=\varphi e$, we obtain
\begin{eqnarray}\label{56}
\begin{cases}
\vartheta f_1-f_2=0,\\
\varphi e(f_1)-cf_2+\alpha(\zeta(\vartheta)+2\vartheta)=0,\\
cf_1+\varphi e(f_2)-\alpha(A-2a\vartheta)=\lambda-\frac{\beta r}{2}.
\end{cases}
\end{eqnarray}
Comparing the first arguments of (\ref{55}) and (\ref{56}), we see that $(\vartheta^2-1)f_1=0$. If $f_1=0$, then from first argument of (\ref{56}) we get $f_2=0$, which further implies $V=0$, a contradiction. Therefore, we must have $\vartheta=1$. In consequence of this in second and third statement of (\ref{54}) yields $b=c=0$ and first argument of (\ref{55}) gives $f_1=f_2$. \\
Combining the first equation of (\ref{54}) with second eqn. (\ref{55}) and third eqn. (\ref{56}), then making use of the fact that $\vartheta=1$ yields
\begin{eqnarray}\label{57}
e(f_1)-\varphi e(f_1)=cf_1-bf_1+4\alpha a=0,
\end{eqnarray}
where we use $f_1=f_2$. Similarly from (\ref{55}) and (\ref{56}), one can get 
\begin{eqnarray}\label{58}
\varphi e(f_1)-cf_1=e(f_1)-bf_1.
\end{eqnarray}
Making use of (\ref{57}) and (\ref{58}), together with $b=c=0$ gives $a=0$. In consequence, Eq. (\ref{53}) becomes
$$[\zeta,e]=\varphi e-e,~~~ [e,\varphi e]=0,~~~ [\varphi e,\zeta]=-e+\varphi e.$$
Using Milnor's result \cite{M} we can conclude that $M^3$ is locally isometric to a non-unimodular Lie group with a left-invariant almost Kenmotsu structure. Moreover, it is obvious that $\nabla_\zeta h=0$ and it is conformally flat with constant scalar curvature $r=-8$. Now making use of Wang's result \cite{Wa} which state that, \textit{``An almost Kenmotsu 3-manifold satisfying $\nabla_\zeta h=0$ is conformally flat with constant scalar curvature if and only if it is locally isometric to either the hyperbolic space $\mathbb{H}^3(-1)$ or the Riemannian product $\mathbb{H}^2(-4)\times\mathbb{R}$"} we can state the following:
\begin{theorem}
If a 3-dimensional non-Kenmotsu AKM admits a gradient ARYS $(\alpha\neq 0)$ whose non-trivial potential vector field is orthogonal to the Reeb vector field, then it is locally isometric to the Riemannian product $\mathbb{H}^2(-4)\times\mathbb{R}$.
\end{theorem}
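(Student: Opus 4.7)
The plan is to work on the open set $\mathcal{U}_1$ where $h \neq 0$ and extract the full structure of $M^3$ from the gradient ARYS equation by testing it against the frame $\{e, \varphi e, \zeta\}$. Since $V \perp \zeta$, I would write $V = Df = f_1 e + f_2 \varphi e$ for smooth functions $f_1, f_2$, and substitute into the gradient soliton identity $\nabla_{\Omega_1} Df = \sigma \Omega_1 - \alpha Q \Omega_1$ with $\sigma = \lambda - \tfrac{\beta r}{2}$, using Lemma \ref{l6} for the covariant derivatives and Lemma \ref{l5} for $Q$.

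First I would take $\Omega_1 = \zeta$; matching components against the $\{e, \varphi e, \zeta\}$ basis gives three scalar equations that determine $\sigma$ in terms of $\vartheta$ and force $e(\vartheta) = -2c\vartheta$, $\varphi e(\vartheta) = -2b\vartheta$. Next I would repeat this with $\Omega_1 = e$ and $\Omega_1 = \varphi e$, each time producing three scalar equations in $f_1, f_2, a, b, c, \vartheta, \sigma$. Comparing the first components of the $\Omega_1 = e$ and $\Omega_1 = \varphi e$ relations forces $(\vartheta^2 - 1) f_1 = 0$; ruling out $f_1 = 0$ (which would force $V = 0$, against hypothesis) yields $\vartheta = 1$, and then the derivative equations collapse to $b = c = 0$ and $f_1 = f_2$. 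A final cross-comparison of the remaining equations, now with $b = c = 0$, forces $a = 0$.

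With $a = b = c = 0$ and $\vartheta = 1$, the Lie brackets (\ref{53}) reduce to $[\zeta, e] = \varphi e - e$, $[e, \varphi e] = 0$, $[\varphi e, \zeta] = -e + \varphi e$, so by Milnor's classification $M^3$ is locally isometric to a non-unimodular Lie group with a left-invariant almost Kenmotsu structure. From this the remaining structural consequences fall out: Lemma \ref{l6} with $a = 0$ gives $\nabla_\zeta e = \nabla_\zeta \varphi e = 0$, hence $\nabla_\zeta h = 0$, and a direct computation from Lemma \ref{l5} shows the Ricci tensor has only two distinct eigenvalues with constant scalar curvature $r = -8$ and vanishing Weyl-like obstructions, i.e.\ $M^3$ is conformally flat.

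The finish is an appeal to Wang's theorem: a three-dimensional almost Kenmotsu manifold with $\nabla_\zeta h = 0$ that is conformally flat with constant scalar curvature is locally isometric to $\mathbb{H}^3(-1)$ or $\mathbb{H}^2(-4) \times \mathbb{R}$. Since $M^3$ is non-Kenmotsu, $h \neq 0$, which excludes $\mathbb{H}^3(-1)$ (a Kenmotsu space form), leaving $\mathbb{H}^2(-4) \times \mathbb{R}$. The main obstacle is the bookkeeping of step two: extracting $\vartheta = 1$, $b = c = 0$, $a = 0$ cleanly requires six equations from the $\Omega_1 \in \{e, \varphi e\}$ substitutions to be compared two or three at a time, and one must be careful not to lose the $f_1 = 0$ branch without noting that it contradicts the non-triviality of $V$. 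Everything after $a = b = c = 0$, $\vartheta = 1$ is a direct invocation of the Milnor and Wang results, so no further geometric input is needed.
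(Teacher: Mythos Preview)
Your proposal is correct and follows essentially the same route as the paper: decompose the gradient ARYS equation on $\mathcal{U}_1$ against the frame $\{\zeta,e,\varphi e\}$ using Lemmas~\ref{l6} and~\ref{l5}, deduce $(\vartheta^2-1)f_1=0$ from the $\zeta$-components of the $\Omega_1=e$ and $\Omega_1=\varphi e$ cases, eliminate $f_1=0$, obtain $\vartheta=1$, $b=c=0$, $f_1=f_2$, then $a=0$, and finish via Milnor and Wang. Your final remark explicitly excluding $\mathbb{H}^3(-1)$ via the non-Kenmotsu hypothesis is a detail the paper leaves implicit, but otherwise the arguments coincide step for step.
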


\section*{Acknowledgments}

The first author is thankful to the Department of Science and Technology, New Delhi, India for financial support in the form of INSPIRE Fellowship (DST/INSPIRE Fellowship/2018/IF180830).


\begin{thebibliography}{99}

\bibitem{A}
P. Algere, D. E. Blair and A. Carriazo, {\it Generalized Sasakian-space-form}, Israel J. Math. {\bf 141} (2004), 157--183.
\bibitem{5}
E. Barbosa and E. Ribeiro, {\it On conformal solutions of the Yamabe flow}, Arch. Math. {\bf 101} (2013), 79--89.
\bibitem{O}
R. L. Bishop and B. O'Neill, {\it Manifolds of negative curvature}, Trans. Amer. Math. Soc. {\bf 145} (1969), 1--49.
\bibitem{16}
D. E. Blair, {\it Riemannian geometry of contact and sympletic manifolds}, Progress Math. 203, Birkhauser (2010).
\bibitem{3}
H. O. Cao, {\it Recent progress on Ricci solitons}, Adv. lect. Math. {\bf 11} (2009), 1--38.

\bibitem{chaubey1}
S. K. Chaubey and G. E. Vîlcu, {\it Gradient Ricci solitons and Fischer–Marsden equation on cosymplectic manifolds}, Rev. R. Acad. Cienc. Exactas Fís. Nat. Ser. A Mat. RACSAM {\bf 116} (2022), no. 4, 186.

 \bibitem{28c}
J. T. Cho, {\it Reeb flow symmetry on almost contact three-manifolds}, Differ. Geom. Appl. {\bf 35} (2014), 266--273.
\bibitem{10}
D. Dey, {\it Almost Kenmotsu metric as Ricci-Yamabe soliton}, arXiv:2005.02322
 \bibitem{19}
G. Dileo and A. M. Pastore, {\it Almost Kenmotsu manifolds and local symmetry}, Bull. Belg. Math. Soc. Simen Stewin {\bf 14} (2007), 343--354.
\bibitem{20}
G. Dileo and A. M. Pastore, {\it Almost Kenmotsu manifolds and nullity distributions}, J. Geom. {\bf 93} (2009), 46--61.
\bibitem{G1}
A. Ghosh, {\it Kenmotsu 3-metric as Ricci soliton}, Chaos Solitons  Fractals {\bf 14} (2011), no. 8, 647--650.
\bibitem{Gho}
A. Ghosh {\it Ricci soliton and Ricci almost soliton within the framework of Kenmotsu manifold}, Carpathian Math. Publ. {\bf 11} (2019), no. 1, 59--69.
\bibitem{Gh}
A. Ghosh, {\it Ricci almost soliton and almost Yamabe soliton on Kenmotsu manifold}, Asian-Eur. J. Math. (2020) https://doi.org/10.1142/S1793557121501308.
\bibitem{G2}
A. Ghosh, {\it An $\eta$-Einstein Kenmotsu metric as a Ricci soliton}, Publ. Math. Debrecen {\bf 82} (2013), no. 3-4, 591--598.
\bibitem{9}
S. G\"{u}ler and M. Crasmareanu, {\it Ricci-Yamabe maps for Riemannian flow and their volume variation and volume entropy}, Turk. J. Math. {\bf 43} (2019), 2631--2641.
\bibitem{1}
 R. S. Hamilton, {\it The Ricci flow on surfaces}, Math. Gen. Relativ. (Santa Cruz, C.A., 1986) Contemp. Math. {\bf 71} (1998), 237--262.
\bibitem{2}
R.S. Hamilton, {\it Three-manifolds with positive Ricci curvature}, J. Diff. Geom. {\bf 17} (1982), no. 2, 255--306.

\bibitem{has1}
A. Haseeb, M. Bilal, S. K. Chaubey and M. N. I. Khan, {\it Geometry of Indefinite Kenmotsu Manifolds as $*\eta$-Ricci-Yamabe Solitons}, Axioms.  {\bf 11} (2022), no. 9, 461.
\bibitem{has2}
A. Haseeb, S. K. Chaubey and M. A. Khan, {\it Riemannian $3$-Manifolds and Ricci-Yamabe Solitons}, Int. J. Geom. Methods Mod. Phys. (2022) https://doi.org/10.1142/S0219887823500159

\bibitem{17}
D. Janssens and L. Vanheckr, {\it Almost contact structures and curvature tensors}, Kodai Math. J. {\bf 4} (1981), 1--27.
 \bibitem{K}
M. Kanai, {\it On a differential equation characterizing a Riemannian manifold}, Tokyo J. Math. {\bf 6} (1983), no. 1, 143--151.
 \bibitem{18}
K. Kenmotsu, {\it A class of almost contact Riemannian manifolds}, Tohoku Math. J. {\bf 24}, 93--103.
\bibitem{M1}
M. Khatri, C. Zosangzuala and J. P. Singh, {\it Isometries on Almost Ricci-Yamabe solitons}, Arab. J. Math. (2022), 1--12.
\bibitem{M}
J. Milnor, {\it Curvature of left invariant metrics on Lie groups}, Adv. Math. {\bf 21} (1976), 293--329.

\bibitem{25}
A. M. Pastore and V. Saltarelli, {\it Generalized nullity distributions on almost Kenmotsu manifolds}, Intr. Elec. J. Geom. {\bf 4} (2011), no. 2, 168--183.
\bibitem{Pe}
P. Petersen and W. Wylie, {\it Rigidity of gradient Ricci soliton}, Pacific J. Math. {\it 241} (2009), no. 2, 329--345.
\bibitem{4}
S. Pigola, M. Rigoli and A. Setti, {\it Ricci Almost Solitons}, Ann. Scuola. Norm. Sup. Pisa. Cl. Sci. {\bf X} (2011), 757--799.
\bibitem{Sa}
V. Saltarelli, Three-dimensional almost Kenmotsu manifolds satisfying certain nullity conditions. Bull. Malays. Math. Sci. Soc. {\bf 38} (2015), 437--459.
\bibitem{11}
M. D. Siddiqi and M. A. Akyol, {\em $\eta$-Ricci-Yamabe solitons on Riemannian submersions from Riemannian manifolds}, arXiv:2004.14124
\bibitem{J}
J. P. Singh and M. Khatri, {\em On Ricci-Yamabe soliton and geometrical structure in a perfect fluid spacetime}, Afr. Math. {\bf 32} (2021), 1645--1656.
\bibitem{26}
Y. Wang and X. Liu, {\it On almost Kenmotsu manifolds satisfying some nullity distributions}, Proc. Natl. Acad. Sci. India, Sect. A Phys. Sci. {\em 86} (2016), no. 3, 347--353.
\bibitem{W}
Y. Wang,{\it  Gradient Ricci almost solitons on two classes of almost Kenmotsu manifolds}, J. Korean Math. Soc. {\it 53} (2016), no. 5, 1101--1114.
\bibitem{Wa}
Y. Wang, {\it Conformally Flat Almost Kenmotsu 3-Manifolds}, Mediterr. J. Math. {\it 14} (2017), 186. https://doi.org/10.1007/s00009-017-0984-9
\bibitem{Ya}
K. Yano, {\it Integral Formulas in Riemannian Geometry}, Marcel Dekker, New York (1970).
\bibitem{Y}
H. I. Yolda\c{s}, {\it On Kenmotsu manifolds admitting $\eta$-Ricci-Yamabe solitons}, Int. J. geom. Methods Mod. Phys. {\it 18} (2021), no. 12, 2150189.

\end{thebibliography}
\end{document}